  \newenvironment{pdfpic}{}{}
\newtheorem{theorem}{Theorem}
\newtheorem{proposition}[theorem]{Proposition}
\newtheorem{lemma}[theorem]{Lemma}
\newdefinition{definition}[theorem]{Definition}
\newdefinition{assumption}[theorem]{Assumption}
\newdefinition{remark}[theorem]{Remark}
\newdefinition{algorithm}[theorem]{Algorithm}
\newdefinition{example}[theorem]{Example}
\def\R{\mathbb{R}}
\def\N{\mathbb{N}}
\def\bZ{\mathbb{Z}}
\def\X{X}
\def\bX{\mathbb{X}}
\def\Xp{X_p}
\def\bXp{\mathbb{X}_p}
\def\U{U}
\def\bU{\mathbb{U}}
\def\Up{U_p}
\def\bUp{\mathbb{U}_p}
\def\UpNp{U_p^{N_p}}
\def\bUpad{\mathbb{U}_p^{\text{ad}}}
\def\bUpNpad{\mathbb{U}_p^{N_p,\text{ad}}}
\def\bIp{\mathbb{I}_p}
\def\cI{\mathcal{I}}
\def\cIp{\mathcal{I}_p}
\def\cB{\mathcal{B}}
\def\cK{\mathcal{K}}
\def\cL{\mathcal{L}}
\def\cKL{\mathcal{K\hspace*{-1mm}L}}
\def\cP{\mathcal{P}}
\def\Np{N_p}
\def\JN{J^N}
\def\JpNp{J_p^{N_p}}
\def\Jinfty{J^\infty}
\def\Jpinfty{J_p^\infty}
\def\VN{V^N}
\def\VpN{V_p^N}
\def\VpNp{V_p^{N_p}}
\def\Vinfty{V^\infty}
\def\muN{\mu^N}
\def\mupN{\mu_p^N}
\def\mupNp{\mu_p^{N_p}}
\def\fp{f_p}
\def\l{\ell}
\def\lp{\l_p}
\def\i{I}
\def\ip{I_p}
\def\xp{x_p}
\def\xzero{x^0}
\def\xpzero{x_p^0}
\def\xpu{x_p^u}
\def\xref{x^{\text{\rm ref}}}
\def\xpref{x_p^{\text{\rm ref}}}
\def\up{u_p}
\def\upref{u_p^{\text{\rm ref}}}
\def\us{u^*}
\def\ups{u_p^*}
\def\argmin{\mathop{\rm argmin}}
\definecolor{shadecolor}{rgb}{1.,1.,1.}%
\definecolor{framecolor}{rgb}{.0,.0,.0}%
\journal{System \& Control Letters}
\begin{document}
\begin{frontmatter}

\title{Parallelizing a State Exchange Strategy for Noncooperative Distributed NMPC\tnoteref{titlelablel}}
\tnotetext[titlelablel]{This work was supported by the Leopoldina Fellowship Programme LPDS 2009-36.}

\author{J\"{u}rgen Pannek}
\ead{juergen.pannek@googlemail.com}
\address{Faculty of Aerospace Engineering \\ University of the Federal Armed Forces \\ 85577 Munich, Germany}

\begin{abstract}
	We consider a distributed non cooperative control setting in which systems are interconnected via state constraints. Each of these systems is governed by an agent which is responsible for exchanging information with its neighbours and computing a feedback law using a nonlinear model predictive controller to avoid violations of constraints. For this setting we present an algorithm which generates a parallelizable hierarchy among the systems. Moreover, we show both feasibility and stability of the closed loop using only abstract properties of this algorithm. To this end, we utilize a trajectory based stability result which we extend to the distributed setting.
\end{abstract}

\begin{keyword}
nonlinear model predictive control \sep stability \sep parallel algorithm
\end{keyword}

\end{frontmatter}

\section{Introduction}

Distributed control problems can arise either naturally, i.e. by a set of coupled systems which shall be controlled, see, e.g., \citet{DS2009}, or if a large problem is decomposed into smaller, again coupled problems, see \citet[Chapter 10]{RM2009} or \citet{S2009} for an overview. In the latter case, the general idea is that smaller problems are solvable easier and faster which allows to even overcompensate the computational effort to coordinate these systems, cf. \citet[Section 7]{RH2007}. In either case, one distinguishes between cooperative control which features a centralized objective, and non cooperative control where the objectives of the systems are independent from each other. Using a centralized objective there are several possibilities to divide the optimization problem into subproblems. If suitable conditions hold then similar performance of the distributed control obtained from these subproblems and of the centralized control can be shown, see, e.g., \citet[Chapter 10]{RM2009} or \citet{GR2010}.

Throughout this work we focus on the non cooperative control setting of systems driven by independent dynamics and control objectives, but coupled by constraints. For each system we impose an agent which exchanges state information with its neighbours and uses its local objective to compute a local control which satisfies the coupling constraints. For the computing task we focus on feedback design via nonlinear model predictive controller (NMPC) which minimizes the distance of the current state to the desired equilibrium over a finite time horizon. To show asymptotic stability of an NMPC closed loop, one often imposes additional stabilizing terminal constraints and costs, see, e.g., \citet{KG1988} or \citet{ChAl1998} respectively. Since such terminal constraints may require long optimization horizons, we focus on the plain NMPC setting without those modifications. In the non distributed case, stability for such problems has been shown in \citet{GPSW2010} whereas the distributed case is treated in \citet{GW2010} using the algorithm of Richards and How \cite{RH2004, RH2007}.

Here, we first prove a stability idea outlined in \citet{GW2010} using the trajectory based setting of \citet[Chapter 7]{GP2011}. This proof allows us to reduce the horizon length in the distributed case while maintaining suboptimality estimates and stability like behavior of the closed loop. Secondly, since the computing time of the NMPC control law for each agent is not negligible, we present an algorithm which allows us to execute these computations in parallel using priority and deordering rules as well as a decision memory. From \citet[Chapter 10]{RM2009} it is known that for the non cooperative control setting one can only expect to reach a Nash equilibrium. Although such a solution may be far from the optimal centralized solution, the closed loop solutions may still be stable and maintain the coupling constraints. For the proposed algorithm we present conditions under which feasibility of the closed loop is guaranteed and present necessary as well as sufficient conditions for asymptotic stability using only abstract properties of both the priority and the deordering rule. While here we focus on the plain NMPC case, we also outline how feasibility and stability results can be obtained using NMPC with terminal constraints or cost.

The paper is organized as follows: First, in Section \ref{Section:Setup and Preliminaries} we formally define the problem under consideration for which we show different stability results for the distributed case in Section \ref{Section:Stability}. In the central Section \ref{Section:The covering algorithm}, we present a covering algorithm which allows us to generate a hierarchy of agents and run the computations of each hierarchy level in parallel. Using this algorithm, we show necessary and sufficient conditions for feasibility and stability of the resulting closed loop and also how much parallelism can be achieved. Instead of a separated example section, we use an analytical example throughout the entire work to present the improvement of the stability result but also to illustrate the abstract functions used within the proposed algorithm in Section \ref{Section:The covering algorithm}. Finally, we draw conclusions in Section \ref{Section:Conclusion} and present ideas for future research based on the presented work.

\section{Setup and Preliminaries}
\label{Section:Setup and Preliminaries}

Throughout this work we consider a set of nonlinear discrete time systems
\begin{align}
	\label{Setup and Preliminaries:system p}
	\xp(n + 1) = \fp(\xp(n), \up(n)), \qquad p \in \cP := \{ 1, \ldots ,P\}, n \in \N_0
\end{align}
with $\xp (n) \in \Xp$ and $\up(n) \in \Up$ and $\N_0$ denoting the set of natural numbers including zero. Here, $\Xp$ and $\Up$, $p \in \cP$, are assumed to be arbitrary metric spaces denoting the state space and the set of admissible control values of the $p$-th system, respectively. The metrics to measure distances between two elements of $\Xp$ or of $\Up$ are denoted by $d_{\Xp}: \Xp \times \Xp \to \R_{\geq 0}$ and $d_{\Up}: \Up \times \Up \to \R_{\geq 0}$ where $\R_{\geq 0}$ denotes the positive reals including zero. In the following we denote the solution of a system $p$ of \eqref{Setup and Preliminaries:system p} corresponding to the initial value $\xp(0) = \xpzero$ and the control sequence $\up(k) \in \Up$, $k = 0, 1, 2, \ldots$, by $\xpu(k, \xpzero)$. 

In order to define our goal we say that a continuous function $\alpha: \R_{\geq 0} \rightarrow \R_{\geq 0}$ is of class $\cK_\infty$ if it satisfies $\alpha(0) = 0$, is strictly increasing and unbounded. A continuous function $\gamma: \R_{\geq 0}^P \to \R_{\geq 0}$ is called a class $\cK_\infty^P$ function if it satisfies $\gamma(0) = 0$, is strictly increasing in each component and is unbounded. A continuous function $\beta: \R_{\geq 0} \times \R_{\geq 0} \rightarrow \R_{\geq 0}$ is of class $\cKL$ if it is strictly decreasing in its second argument with $\lim_{t \rightarrow \infty} \beta(r, t) = 0$ for each $r > 0$ and satisfies $\beta(\cdot, t) \in \cK_\infty$ for each $t \geq 0$. Moreover, $\cB_r(x)$ denotes the open ball with center $x$ and radius $r$ and for arbitrary $x_1, x_2 \in \X$ we denote the distance from $x_1$ to $x_2$ by $\| x_1 \|_{x_2} = d_{\X}(x_1, x_2)$.

For the set of systems \eqref{Setup and Preliminaries:system p} the overall system is given by
\begin{align}
	\label{Setup and Preliminaries:system}
	x(n + 1) = f(x(n), u(n)), \qquad n \in \N_0
\end{align}
with state $x(n) = ( x_1(n)^\top, \ldots, x_P(n)^\top)^\top \in \X = \X_1 \times \ldots \times \X_P$ and control $u(n) = (u_1(n)^\top, \ldots, u_P(n)^\top)^\top \in \U =\U_1 \times \ldots \times \U_P$. Now, our goal is to asymptotically stabilize system \eqref{Setup and Preliminaries:system} at a desired equilibrium point $\xref \in \X$, i.e. to fulfill the following:

\begin{definition}\label{Setup and Preliminaries:def:stability}
	Let $\xref \in \X$ be an equilibrium for a system \eqref{Setup and Preliminaries:system}, i.e., there exists $u \in \U$ such that $f(\xref,u)=\xref$. Then we say that $\xref$ is {\em locally asymptotically stable} if for a given control sequence $(u(n))_{n \in \N_0}$ there exist $r>0$ and a function $\beta \in \cKL$ such that the inequality
	\begin{equation}
		\label{Setup and Preliminaries:def:stability:eq1}
		\| x^u(n, x^0) \|_{\xref} \leq \beta( \| \xzero \|_{\xref}, n)
  	\end{equation}
	holds for all $\xzero \in \cB_r(\xref)$ and all $n\in\N_0$. 
\end{definition}

Additionally, the solution $x^u(\cdot, \xzero)$ shall satisfy state and control constraints. Throughout this work, we incorporate such constraints by considering suitable subsets of the overall state and control value space $\bX \subset \X$, $\bU \subset \U$ for system \eqref{Setup and Preliminaries:system}. As a result, systems \eqref{Setup and Preliminaries:system p} are coupled via the constraint sets $\bX$ and $\bU$ although the respective dynamics are decoupled. The following example illustrates this setting and will be used throughout this paper.

\begin{example}\label{Setup and Preliminaries:example}
	Consider two cars attempting to cross a one lane brigde, i.e. one car has to wait, cf. Figure \ref{Setup and Preliminaries:fig:carexample} for an illustration. Suppose the discrete time dynamics of the cars are given by
	\begin{align*}
		\xp(n + 1) = \xp(n) + \up(n)
	\end{align*}
	with $\up \in \bUp = \{-1, 0, 1\}^2$ and $\xp \in \bZ^2$ for $p = 1, 2$. Since the cars shall not collide, we obtain the restriction
	\begin{align}
		\label{Setup and Preliminaries:example:collision constraint}
		( x_{1,1}, x_{1,2} )^\top \not = (x_{2,1}, x_{2,2} )^\top.
	\end{align}
	The one lane bridge additionally imposes the constraints
	\begin{align}
		\label{Setup and Preliminaries:example:blocking constraint}
		& x_{p,2} = 0 \text{ if } x_{p,1} = 0 \; \text{for $p = 1, 2$} \quad \text{and} \\
		\label{Setup and Preliminaries:example:passing constraint}
		& \left( \begin{array}{c} x_{1,1} + u_{1,1} \\ x_{1,2} + u_{1,2} \end{array} \right) \not = \left( \begin{array}{c} x_{2,1} \\ x_{2,2} \end{array} \right), \quad
		\left( \begin{array}{c} x_{1,1} \\ x_{1,2} \end{array} \right) \not = \left( \begin{array}{c} x_{2,1} + u_{2,1} \\ x_{2,2} + u_{2,2} \end{array} \right)
	\end{align}
	which together form the set $\bX$. Hence, the local set of admissible moves may depend on which car is allowed to drive first, cf. Figures \ref{Setup and Preliminaries:fig:carexample0} and \ref{Setup and Preliminaries:fig:carexample1}. Note that \eqref{Setup and Preliminaries:example:blocking constraint} is the only local constraint whereas --- if they are considered --  \eqref{Setup and Preliminaries:example:collision constraint} induces an algebraic and \eqref{Setup and Preliminaries:example:passing constraint} a neighbouring dynamic dependent coupling.
	\begin{figure}[!ht]
		\begin{center}
			\subfloat[Admissible moves of $x_2$ if $x_1$ is off the bridge\label{Setup and Preliminaries:fig:carexample0}]{\includegraphics[width=0.32\textwidth]{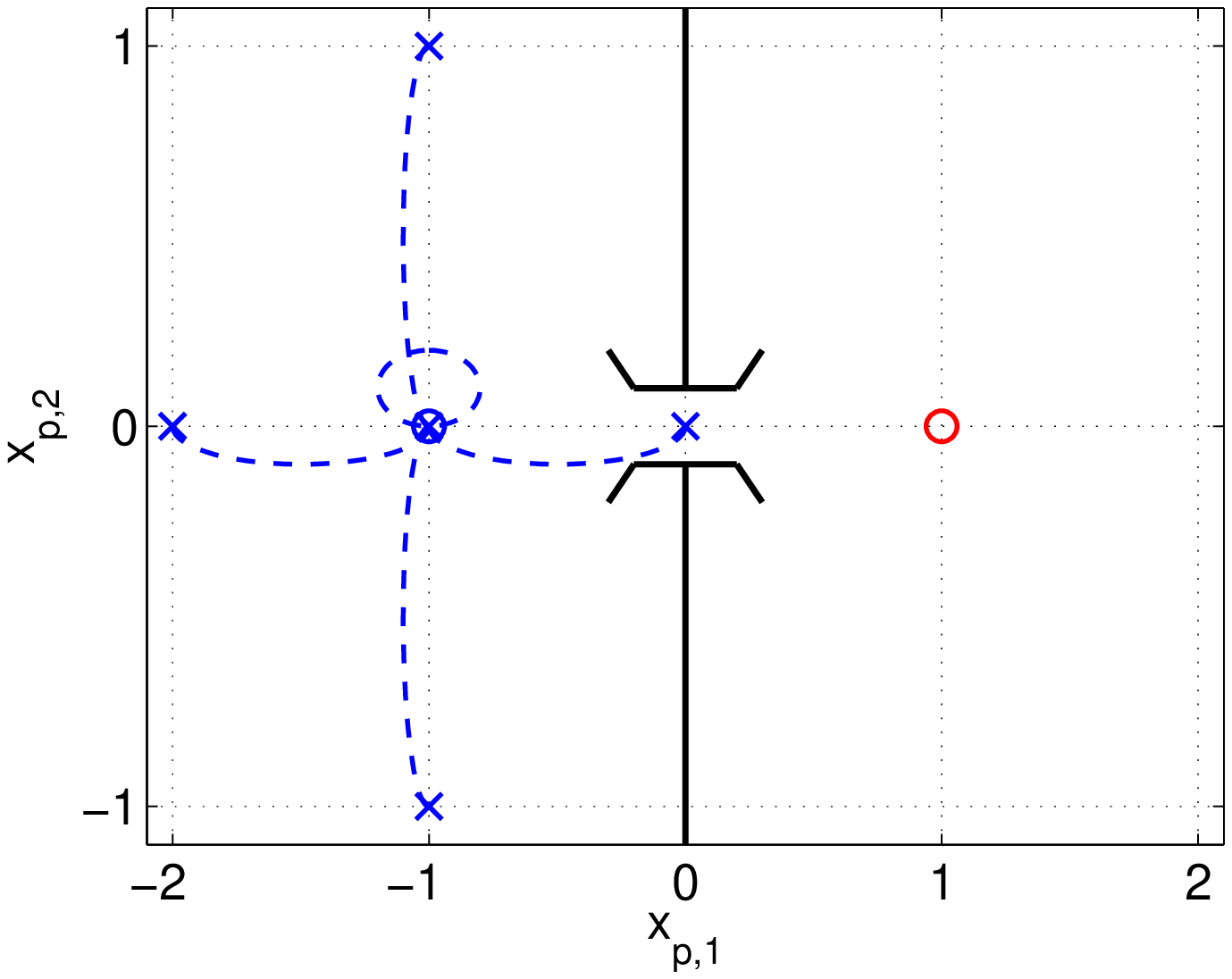}}
			\subfloat[Admissible moves of $x_2$ if $x_1$ enters bridge\label{Setup and Preliminaries:fig:carexample1}]{\includegraphics[width=0.32\textwidth]{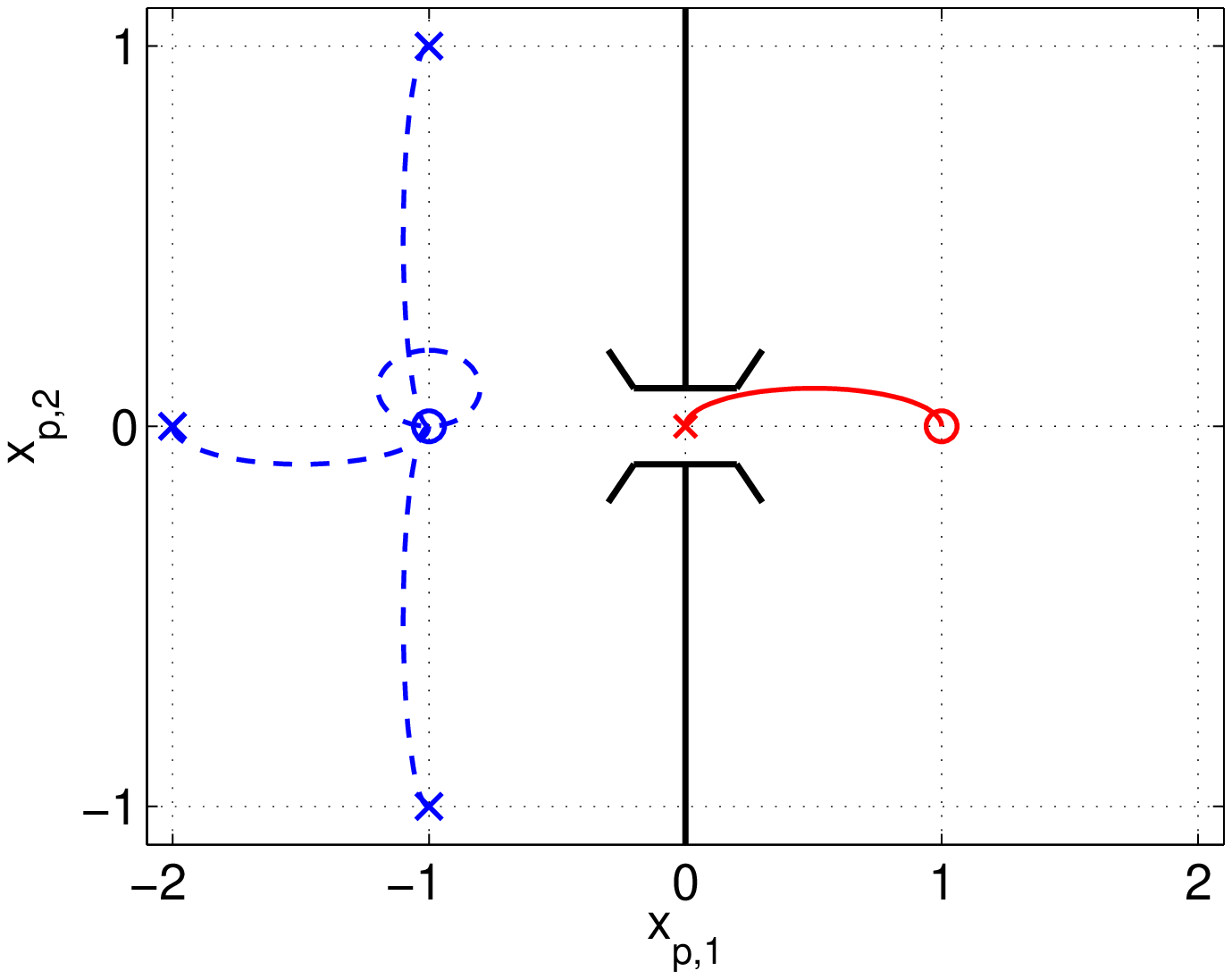}}
			\subfloat[Admissible solution\label{Setup and Preliminaries:fig:carexample2}]{\includegraphics[width=0.32\textwidth]{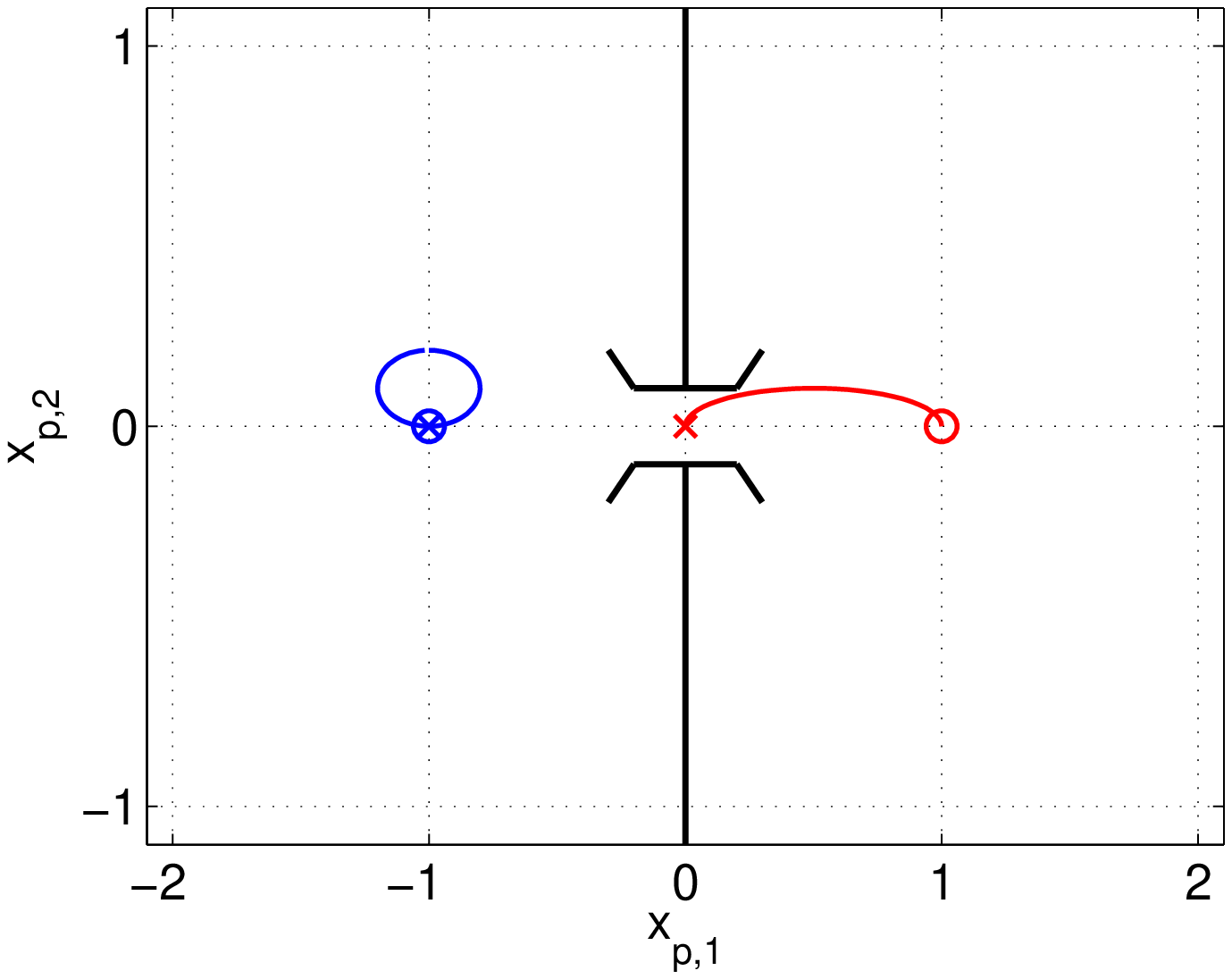}}
		\end{center}
		\caption{Admissible moves for initial conditions $\xzero_{1} = (1, 0)^\top$, $\xzero_{2}=(-1,0)^\top$}\label{Setup and Preliminaries:fig:carexample}
	\end{figure}
\end{example}

The purpose of this work is to show conditions under which stability of the overall systems can be guaranteed by using only local controllers. To this end, we impose an agent for each system $p \in \cP$ to compute a suitable control sequence $\up(\cdot) \in \Up^{\N_0} := \{ \up(k) \mid \up(k) \in \Up \forall k \in \N_0 \}$ and to exchange information with other agents in order to verify that the constraints $\bX \subset \X$, $\bU \subset \U$ are satisfied. Throughout this work each agent computes its control sequence via a nonlinear model predictive controller, a methodology which will be explained after Definition \ref{Setup and Preliminaries:set of admissible control sequences}, below. In order to achieve asymptotic stability of the overall system \eqref{Setup and Preliminaries:system} we develop a covering algorithm to coordinate all agents. The idea of this algorithm is the following: Since some subsystems impose constraints on each other, a priority rule is used to generate a hierarchy among the subsystems. As a result, agents which are on the same hierarchy level can compute their local optimal control in parallel while the hierarchy levels remain in serial. Additionally, a deordering rule is introduced to repeatedly verify if the hierarchy can be flattened, i.e. if more agents can work in parallel. For details on these rules we refer to Section \ref{Section:The covering algorithm}.

Since we want to compute local controls $\up$ 
we must define the local constraints for each single system $\fp$, $p \in \cP$. To this end, we ``project'' the constraint set $\bX$ to the state space of a subset of systems.

\begin{definition}\label{Setup and Preliminaries:def:partial state contraints}
	For an index set $\cIp = \{ p_1, \ldots, p_m \} \subset \cP$ with $m \in \N$, $m \leq P$ and $p_i \not = p_j$ for all $i, j \in \{1, \ldots, m\}$ the \textit{set of partial states} is defined as $\X_{\cIp} = \X_{p_1} \times \ldots \times \X_{p_m}$ and we denote elements of $\X_{\cIp}$ by $x_{\cIp} = (x_{p_1}, \ldots, x_{p_m})$. Accordingly, the \textit{partial state constraint set} is defined by
	\begin{align*}
		\bX_{\cIp} := \{ x_{\cIp} \in \X_{\cIp} \mid \text{there is $\tilde{x} \in \bX$ with $\tilde{x}_{p_i} = x$ for $i = 1, \ldots, m$} \}.
	\end{align*}
\end{definition}

In case of Example \ref{Setup and Preliminaries:example}, Definition \ref{Setup and Preliminaries:def:partial state contraints} basically means that only those constraints induced by the neighbours contained in $\cIp$ have to be considered, i.e. if agent $p = 1$ ignores agent $p=2$, then only constraint \eqref{Setup and Preliminaries:example:blocking constraint} has to be fulfilled. In the general case, this task can be accomplished by using information on neighbouring systems which are available to an agent and allow to generate a hierarchy among the agents. Here, we assume that this information can be exchanged repeatedly in between two time instants $n$ and $n + 1$. 

Throughout this work we consider changing network topologies, i.e. the sets of neighbours at time instants $n$ and $n + 1$ may differ, see Figure \ref{Setup and Preliminaries:fig:communication} for an illustration. 

\begin{figure}[!ht]
	\begin{minipage}{6.0cm}
		\centering
		\begin{pdfpic}
			\psset{xunit=1.0cm,yunit=1.0cm,runit=1.0cm}
			\sffamily
			\begin{pspicture}(0.0,0.2)(4.5,4.2)
				\put(1.0,1.0){\circle*{0.2}}
				\put(4.0,1.0){\circle*{0.2}}
				\put(1.0,4.0){\circle*{0.2}}
				\put(4.0,4.0){\circle*{0.2}}
				\uput[180](1.0,1.0){$x_1$}
				\uput[0](4.0,1.0){$x_2$}
				\uput[0](4.0,4.0){$x_3$}
				\uput[180](1.0,4.0){$x_4$}
				\psline[linestyle=solid,arrowscale=2]{<->}(1.1,1.0)(3.9,1.0)
				\psline[linestyle=solid,arrowscale=2]{<->}(1.1,1.1)(3.9,3.9)
				\psline[linestyle=solid,arrowscale=2]{<->}(1.0,1.1)(1.0,3.9)
				\psline[linestyle=solid,arrowscale=2]{<->}(3.9,1.1)(1.1,3.9)
				\psline[linestyle=solid,arrowscale=2]{<->}(4.0,1.1)(4.0,3.9)
				\psline[linestyle=solid,arrowscale=2]{<->}(3.9,4.0)(1.1,4.0)
				\uput[270](2.5,0.7){Time instant $n$}
			\end{pspicture}
		\end{pdfpic}
	\end{minipage}
	\hfill
	\begin{minipage}{6.0cm}
		\centering
		\begin{pdfpic}
			\psset{xunit=1.0cm,yunit=1.0cm,runit=1.0cm}
			\begin{pspicture}(0.0,0.2)(4.5,4.2)
				\sffamily
				\put(1.0,1.0){\circle*{0.2}}
				\put(4.0,1.0){\circle*{0.2}}
				\put(1.0,4.0){\circle*{0.2}}
				\put(4.0,4.0){\circle*{0.2}}
				\uput[180](1.0,1.0){$x_1$}
				\uput[0](4.0,1.0){$x_2$}
				\uput[0](4.0,4.0){$x_3$}
				\uput[180](1.0,4.0){$x_4$}
				\psline[linestyle=solid,arrowscale=2]{<->}(1.1,1.0)(3.9,1.0)
				\psline[linestyle=solid,arrowscale=2]{<->}(1.0,1.1)(1.0,3.9)
				\psline[linestyle=solid,arrowscale=2]{<->}(4.0,1.1)(4.0,3.9)
				\psline[linestyle=solid,arrowscale=2]{<->}(3.9,4.0)(1.1,4.0)
				\uput[270](2.5,0.7){Time instant $n + 1$}
			\end{pspicture}
		\end{pdfpic}
	\end{minipage}
	\caption{Possible sequence of communication graphs at time instants $n$ and $n + 1$}
	\label{Setup and Preliminaries:fig:communication}
\end{figure}
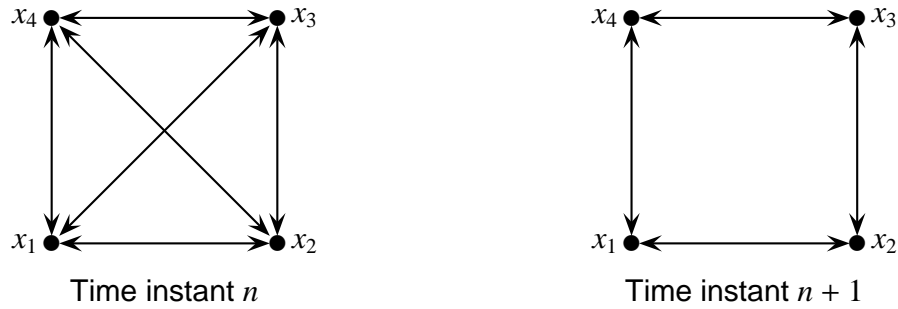

Additionally, we allow the case that even if neighbouring information of a system $q \in \cP \setminus \{ p \}$ is known to an agent $p \in \cP$, agent $p$ ignores that information if his hierarchy level is higher than the level of the neighbour. Consequently, the dependency graph which results from this hierarchy may differ from the communication graph as illustrated in Figure \ref{Setup and Preliminaries:fig:communication and dependency}.

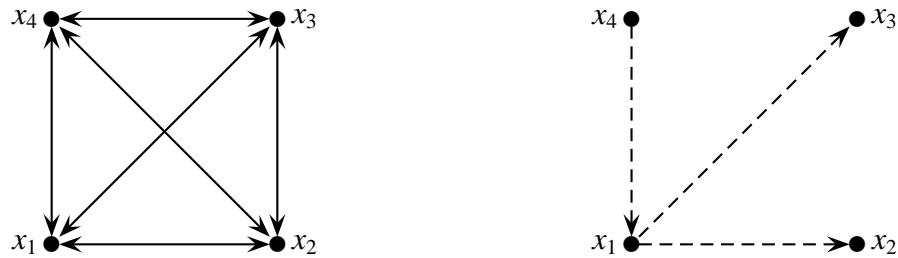
\begin{figure}[!ht]
	\begin{minipage}{6.0cm}
		\centering
		\begin{pdfpic}
			\psset{xunit=1.0cm,yunit=1.0cm,runit=1.0cm}
			\sffamily
			\begin{pspicture}(0.0,0.8)(4.5,4.2)
				\put(1.0,1.0){\circle*{0.2}}
				\put(4.0,1.0){\circle*{0.2}}
				\put(1.0,4.0){\circle*{0.2}}
				\put(4.0,4.0){\circle*{0.2}}
				\uput[180](1.0,1.0){$x_1$}
				\uput[0](4.0,1.0){$x_2$}
				\uput[0](4.0,4.0){$x_3$}
				\uput[180](1.0,4.0){$x_4$}
				\psline[linestyle=solid,arrowscale=2]{<->}(1.1,1.0)(3.9,1.0)
				\psline[linestyle=solid,arrowscale=2]{<->}(1.1,1.1)(3.9,3.9)
				\psline[linestyle=solid,arrowscale=2]{<->}(1.0,1.1)(1.0,3.9)
				\psline[linestyle=solid,arrowscale=2]{<->}(3.9,1.1)(1.1,3.9)
				\psline[linestyle=solid,arrowscale=2]{<->}(4.0,1.1)(4.0,3.9)
				\psline[linestyle=solid,arrowscale=2]{<->}(3.9,4.0)(1.1,4.0)
			\end{pspicture}
		\end{pdfpic}
	\end{minipage}
	\hfill
	\begin{minipage}{6.0cm}
		\centering
		\begin{pdfpic}
			\psset{xunit=1.0cm,yunit=1.0cm,runit=1.0cm}
			\begin{pspicture}(0.0,0.8)(4.5,4.2)
				\sffamily
				\put(1.0,1.0){\circle*{0.2}}
				\put(4.0,1.0){\circle*{0.2}}
				\put(1.0,4.0){\circle*{0.2}}
				\put(4.0,4.0){\circle*{0.2}}
				\uput[180](1.0,1.0){$x_1$}
				\uput[0](4.0,1.0){$x_2$}
				\uput[0](4.0,4.0){$x_3$}
				\uput[180](1.0,4.0){$x_4$}
				\psline[linestyle=dashed,arrowscale=2]{->}(1.1,1.0)(3.9,1.0)
				\psline[linestyle=dashed,arrowscale=2]{->}(1.1,1.1)(3.9,3.9)
				\psline[linestyle=dashed,arrowscale=2]{<-}(1.0,1.1)(1.0,3.9)
			\end{pspicture}
		\end{pdfpic}
	\end{minipage}
	\caption{Possible difference of communication and dependency graph}
	\label{Setup and Preliminaries:fig:communication and dependency}
\end{figure}

Moreover, as sent information may be delayed or even lost, we want to allow for considering old information on neighbours and variable lengths of this information. Note that the latter may also allow agents to skip recomputations of controls. Introducing $N_q$ as the length and $n_q$ as the time instant at which neighbour $q$ has computed the state sequence $(x_q^{n_q}(0), \ldots, x_q^{n_q}(N_q))$, we define the exchanged neighbouring information as follows:

\begin{definition}\label{Setup and Preliminaries:neighbouring information}
	Suppose that at time instant $n \in \N_0$ agent $p$ knows the state sequences $x_q^{n_q}(\cdot) = (x_q^{n_q}(0), \ldots, x_q^{n_q}(N_q))$, $N_q \in \N_0$, computed at time instant $n_q \leq n$ for a given \textit{neighbouring index set} $\cIp(n)$, that is $q \in \cIp(n)$ with $p \not \in \cIp(n)$. We define the \textit{neighbouring information} as
	\begin{align*}
		\ip(n) = \{ (q, n_q, N_q, x_q^{n_q}(\cdot)) \mid q \in \cIp(n) \}
	\end{align*}
	being an element of the set $\bIp = 2^Q \mbox{ with } Q = ( \cP \setminus \{ p \} ) \times \N_0 \times \N_0 \times \X^{\N}$.
\end{definition}

Knowing the states of neighbouring systems for a certain time period, we can define the index set used within the ``projection'' of the constraint set $\bX$.

\begin{definition}\label{Setup and Preliminaries:neighbouring prediction index set}
	For a given time instant $n \in \N_0$ and an agent $p \in \cP$ with neighbouring information $\ip(n)$, we call the set of systems $q \in \cIp(n) \setminus \{p\}$ which are imposing constraints on system $p$ at time instant $n + k \in \N_0$, $k \geq 0$ \textit{neighbouring prediction index set}. This set is given by
	\begin{align*}
		\cIp(n, k) = \{ q \in \cIp(n) \setminus \{p\} \mid n + k \leq n_q + N_q \}.
	\end{align*}	
\end{definition}

Similar to possible moves of the cars in Example \ref{Setup and Preliminaries:example}, we can use the partial state constraint set connected to neighbouring information available to an agent and define the set of admissible controls from which the control sequence $\up(\cdot)$ can be chosen, cf. Figures \ref{Setup and Preliminaries:fig:carexample0}, \ref{Setup and Preliminaries:fig:carexample1}.

\begin{definition}\label{Setup and Preliminaries:set of admissible control sequences}
	 Given a time instant $n \in \N_0$ and an agent $p \in \cP$ with initial value $\xpzero$ and neighbouring information $\ip(n)$, we define the \textit{set of admissible control sequences} for system $p$ at time instant $n$ as
	\begin{align*}
		\bUpad(n, \xpzero, \ip(n)) = \{ \up(\cdot) \in \Up^{\N_0} \mid & \mbox{ for all $k = 0, 1, \ldots$ we have $\up(k) \in \bUp$ and } \\
		& (\xpu(k, \xpzero), ( x_q^{n_q}(k + n - n_q) )_{\cIp(n, k)}) \in \bX_{\{p\} \cup \cIp(n, k)} \}.
	\end{align*}
\end{definition}

Using an NMPC algorithm is one possibility to compute a control from the set of admissible controls. In particular, the method tries to approximate a control sequence such that the functional
\begin{align}	
	\label{Setup and Preliminaries:cost functional infty}
	\Jpinfty(\xpzero, \up) = \sum\limits_{k = 0}^{\infty} \lp(\xpu(k, \xpzero), \up(k))
\end{align}
is minimized over all admissible control sequences, that is sequences $\up(\cdot)$ with $\up(k) = \ups(0)$ for all $k \in \N_0$ with $\ups \in \bUpad(k, \xpu(k, \xpzero), \ip(k))$. Here, the function $\lp$ is a stage cost function penalizing both the distance of the state to the desired equilibrium and the used control. A popular choice for this function is $\lp(\xp, \up) = \| \xp \|_{\xpref} + \lambda \| \up \|_{\upref}$ with weighting parameter $\lambda > 0$. 

Computing a control minimizing \eqref{Setup and Preliminaries:cost functional infty} is, in general, computational intractable. To circumvent this issue the NMPC algorithm uses the truncated cost functional
\begin{align}	
	\label{Setup and Preliminaries:cost functional p}
	\JpNp(\xpzero, \up) = \sum\limits_{k = 0}^{\Np - 1} \lp(\xpu(k, \xpzero), \up(k))
\end{align}
with finite horizon of length $\Np$ and initial value $\xpzero$. Hence, a finite minimizing control sequence $\ups \in \bUpNpad(n, \xpzero, \ip(n))$ is computed with
\begin{align*}
	\bUpNpad(n, \xpzero, \ip(n)) = \{ \up(\cdot) \in \UpNp \mid & \mbox{ for all $k = 0, \ldots, \Np$ we have $\up(k) \in \bUp$ and } \\
	& (\xpu(k, \xpzero), ( x_q^{n_q}(k + n - n_q) )_{\cIp(n, k)}) \in \bX_{\{p\} \cup \cIp(n, k)} \}.
\end{align*}
In the following we assume that a minimizing control sequence exists and denote the corresponding optimal value function by
\begin{align*}
	\VpNp(\xp(n), \ip(n)) = \min_{\up \in \bUpNpad(n, \xp(n), \ip(n))} \JpNp(\xp(n), \up)
\end{align*}
where the minimizing control sequence is given by
\begin{align*} 
	\ups = \argmin_{\up \in \bUpNpad(n, \xp(n), \ip(n))} \JpNp(\xp(n), \up).
\end{align*}
Here, the $\argmin$ operator is used in the following sense: given a map $a:\U \to \R$, a nonempty subset $\widetilde{\U} \subseteq \U$ and a value $\us \in \widetilde{\U}$ we write $\us = \argmin_{u \in \widetilde{\U}} a(u)$ if and only if $a(\us) = \min_{u \in \widetilde{\U}} a(u)$ holds. Note that we do not require uniqueness of the minimizer $\us$. In case of uniqueness the $\argmin$ operator can be understood as an assignment, otherwise it is just a convenient way of writing ``$\us$ minimizes $a(u)$''.

Having obtained a minimizing sequence $\ups(\cdot)$, only the first element $\ups(0)$ of the control sequence is implemented. Then the entire problem is shifted forward in time by one time instant and both a new initial value and neighbouring information need to be obtained. Applying this method iteratively results in a feedback law which assigns the first element of the minimizing control sequence $\ups(\cdot)$ to the current state of the $p$-th system $\xp(n)$ and the neighbouring information $\ip(n)$ of the corresponding agent, i.e. a map
\begin{align}
	\label{Setup and Preliminaries:eq:feedback}
	\mupNp: (\xp(n), \ip(n)) \mapsto \ups(0).
\end{align}
Accordingly, the closed loop solution of the $p$-th system is given by
\begin{align}
	\label{Setup and Preliminaries:eq:closed loop solution p}
	\xp(n + 1) = f(\xp(n), \mupNp(\xp(n), \ip(n))) \quad \text{with} \quad \xp(0) = \xpzero.
\end{align}

Using this setting, we first show conditions which guarantee asymptotic stability of the closed loop for local controllers. 

\section{Stability}
\label{Section:Stability}

While commonly endpoint constraints or a Lyapunov function type endpoint weight are used to ensure stability of the closed loop, see, e.g., the articles of \citet{KG1988}, \citet{ChAl1998}, \citet{JH2005} and \citet{Graichen2010}, we consider the plain NMPC version without these modifications. In order to guarantee stability in this case, we use the ``relaxed'' version of the dynamic programming principle, cf. \citet{LR2006}. In particular, one can show asymptotic stability of \eqref{Setup and Preliminaries:system} in a trajectory based setting using a relaxed Lyapunov condition, see \citet[Proposition 7.6]{GP2011}. Note that this stability result requires a centralized setting and the horizons to satisfy $N_p = N$ for all $p \in \cP$. Hence, for the overall system \eqref{Setup and Preliminaries:system} we denote the combined stage costs by $\l(x(n), u(n))$, the finite and infinite cost functional by $\JN(\xzero, u)$, $\Jinfty(\xzero, u)$ and the corresponding combined value functions by $\VN(x(n))$, $\Vinfty(x(n))$ which allows us to apply the stability result of \citet[Proposition 7.6]{GP2011}:

\begin{proposition}\label{Stability:proposition:online alpha aposteriori}
	Consider a feedback law $\muN: \bX \rightarrow \bU$ and the closed loop trajectory $x(\cdot)$ of \eqref{Setup and Preliminaries:system} with control $u = \muN$ and initial values $x(0) \in \bX$ to be given. If the optimal value function $\VN:\bX \rightarrow \R_{\geq 0}$ satisfies 
	\begin{align}
		\label{Stability:proposition:online alpha aposteriori:eq1}
		\VN(x(n)) \geq \VN(f(x(n), \muN(x(n))) + \alpha \l(x(n), \muN(x(n))) 
	\end{align}
	for some $\alpha \in (0, 1]$ and all $n \in \N_0$, then 
	\begin{align}
		\label{Stability:proposition:online alpha aposteriori:eq2}
		\alpha \Vinfty(x(n)) \leq \alpha \Jinfty(x(n), \muN) \leq \VN(x(n)) \leq \Vinfty(x(n)) 
	\end{align}
	holds for all $n \in \N_0$.

	If, in addition, there exist $\alpha_1,\alpha_2,\alpha_3 \in \cK_\infty$ such that
	\begin{align}
		\label{Stability:proposition:online alpha aposteriori:eq3}
		\alpha_1(\| x \|_{\xref}) \leq \VN(x) \leq \alpha_2(\| x \|_{\xref}) \quad \mbox{ and } \quad \l(x, u) \ge \alpha_3(\| x \|_{\xref})
	\end{align}
	holds for all $x(n) \in \bX$ with $n \in \N_0$, then there exists a function $\beta \in \cKL$ which only depends on $\alpha_1, \alpha_2, \alpha_3$ and $\alpha$ such that the inequality
	\begin{align}
		\label{Stability:proposition:online alpha aposteriori:eq4}
		\| x(n) \|_{\xref} \leq \beta(\| x(0) \|_{\xref},n)
	\end{align}
	holds for all $n \in \N_0$, i.e., $x$ behaves like a trajectory of an asymptotically stable system. 
\end{proposition}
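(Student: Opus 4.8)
The plan is to run the classical relaxed dynamic programming / suboptimality argument, treating the two assertions of the proposition separately.

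For the chain \eqref{Stability:proposition:online alpha aposteriori:eq2} I would dispose of the two outer inequalities first. The estimate $\VN(x) \le \Vinfty(x)$ follows from $\l \ge 0$: truncating an (approximately) optimal control for the infinite-horizon problem at $x$ to its first $N$ stages produces an admissible control for the $N$-horizon problem whose truncated cost is no larger than its infinite-horizon cost, so $\VN(x) \le \Vinfty(x)$; the only point that needs a word is that the truncation remains feasible, which is where the constraint sets $\bX,\bU$ enter. The estimate $\alpha\Vinfty(x(n)) \le \alpha\Jinfty(x(n),\muN)$ is just optimality of $\Vinfty$ together with $\alpha>0$, since the input generated along the closed loop by $\muN$ is one admissible control in the infimum defining $\Vinfty(x(n))$.

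The core step is the middle inequality $\alpha\Jinfty(x(n),\muN)\le\VN(x(n))$, obtained by telescoping \eqref{Stability:proposition:online alpha aposteriori:eq1}. Since \eqref{Stability:proposition:online alpha aposteriori:eq1} is assumed for all $n\in\N_0$, it holds verbatim along the shifted closed-loop trajectory; summing it from index $n$ to $n+K-1$ makes the $\VN$-terms collapse and leaves
\begin{equation*}
\alpha\sum_{j=0}^{K-1}\l\bigl(x(n+j),\muN(x(n+j))\bigr)\le \VN(x(n))-\VN(x(n+K))\le\VN(x(n)),
\end{equation*}
where the last step uses $\VN\ge 0$. Letting $K\to\infty$ and invoking monotone convergence (all summands are nonnegative) yields the claim, hence all of \eqref{Stability:proposition:online alpha aposteriori:eq2}.

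For the second assertion I would convert the relaxed inequality into a genuine Lyapunov decrease and finish with a scalar comparison lemma. Writing $y(n):=\VN(x(n))$, inequalities \eqref{Stability:proposition:online alpha aposteriori:eq1} and \eqref{Stability:proposition:online alpha aposteriori:eq3} give $y(n+1)\le y(n)-\alpha\,\alpha_3(\alpha_2^{-1}(y(n)))$, and since $y(n+1)\ge 0$ the subtracted term is automatically bounded by $y(n)$; thus $y(n+1)\le\sigma(y(n))$ for a continuous $\sigma$ with $0\le\sigma(r)<r$ for $r>0$ depending only on $\alpha,\alpha_2,\alpha_3$. Passing if necessary to a nondecreasing majorant of $\sigma$ and iterating, one bounds $y(n)$ by $\widetilde\beta(y(0),n)$ for some $\widetilde\beta\in\cKL$; then $\|x(n)\|_{\xref}\le\alpha_1^{-1}(y(n))$ and $y(0)\le\alpha_2(\|x(0)\|_{\xref})$ give \eqref{Stability:proposition:online alpha aposteriori:eq4} with $\beta(r,n):=\alpha_1^{-1}\bigl(\widetilde\beta(\alpha_2(r),n)\bigr)\in\cKL$, depending only on $\alpha_1,\alpha_2,\alpha_3,\alpha$. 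The routine parts are the telescoping and the two sandwich estimates; I expect the main obstacle to be the construction of $\widetilde\beta$ from the one-step contraction $y(n+1)\le\sigma(y(n))$ --- showing $\sigma^n(r)\to 0$ for every $r$ and dominating the orbit by a function that is jointly continuous, class $\cK_\infty$ in $r$ and decaying to $0$ in $n$. I would either quote this comparison principle from \citet[Chapter~7]{GP2011} or assemble it from the standard interpolation argument for nested decreasing sequences; a secondary point to handle with care is the feasibility bookkeeping underlying $\VN\le\Vinfty$ and the admissibility of the $\muN$-closed loop.
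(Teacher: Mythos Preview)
Your argument is correct and is precisely the standard relaxed dynamic programming proof; note, however, that the paper does not supply its own proof of this proposition but merely quotes it from \citet[Proposition~7.6]{GP2011}, so there is nothing in the paper to compare against beyond the citation. The telescoping for the middle inequality of \eqref{Stability:proposition:online alpha aposteriori:eq2}, the truncation/optimality arguments for the outer inequalities, and the comparison-lemma construction of $\beta$ are exactly what that reference does, so your write-up is faithful to the intended source.
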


The key assumption in Proposition \ref{Stability:proposition:online alpha aposteriori} is the relaxed Lyapunov--inequality \eqref{Stability:proposition:online alpha aposteriori:eq1} in which $\alpha$ can be interpreted as a lower bound for the rate of convergence. From the literature, it is well--known that this condition is satisfied for sufficiently long horizons $N$, cf. \citet{JH2005}, \citet{GMTT2005} or \citet{AB1995}, and that a suitable $N$ may be computed via methods described in \citet[Chapter 7]{GP2011} or \citet{G2010}.

Now we consider a distributed setting of Proposition \ref{Stability:proposition:online alpha aposteriori} using compositions to combine of the set of systems \eqref{Setup and Preliminaries:system p}. The idea of such compositions is to introduce a weighting among the subsystems which in our further analysis will allow for increases of costs along the closed loop for some subsystems.

\begin{proposition}\label{Stability:proposition:online alpha aposteriori distributed}
	Consider feedback laws $\mupN: \bXp \times \bIp \rightarrow \bUp$ and closed loop trajectories $\xp(\cdot)$ of \eqref{Setup and Preliminaries:eq:closed loop solution p} with initial values $\xp(0) \in \bXp$ to be given. If the optimal value functions $\VpN:\bXp \rightarrow \R_{\geq 0}$ satisfy
	\begin{align}
		\label{Stability:proposition:online alpha aposteriori distributed:eq1}
		\VpN(\xp(n)) \geq \VpN(\fp(\xp(n), \mupN(\xp(n), \ip(n)))) + \alpha \lp(\xp(n), \mupN(\xp(n), \ip(n))) 
	\end{align}
	for some $\alpha \in (0, 1]$ and all $n \in \N_0$, then for any weighting function 
	$\gamma: \R^P \to \R_{\geq 0}$, $\gamma(x)=\gamma^\top x$, $\gamma_i \in \R_{> 0}$ we have that \eqref{Stability:proposition:online alpha aposteriori:eq2} holds for all $n \in \N_0$ with
	\begin{align*}
		\VN(x) := \gamma( ( V_1^N(x_1), \ldots, V_P^N(x_p))^\top ) \quad \mbox{and} \quad \l(x, u) := \gamma( ( \l_1(x_1, u_1), \ldots, \l_P(x_P, u_P))^\top ).
	\end{align*}

	If, in addition, for every $p \in \cP$ there exist $\alpha_1^p,\alpha_2^p,\alpha_3^p \in \cK_\infty$ such that
	\begin{align}
		\label{Stability:proposition:online alpha aposteriori distributed:eq2}
		\alpha_1^p(\| \xp \|_{\xpref}) \leq \VpN(x) \leq \alpha_2^p(\| \xp \|_{\xpref}) \quad \mbox{ and } \quad \lp(\xp, \up) \ge \alpha_3^p(\| \xp \|_{\xpref})
	\end{align}
	holds for all $\xp(n) \in \bX$ with $n \in \N_0$, then there exists a function $\beta \in \cKL$ which only depends on $\gamma$, $\alpha$ and all $\alpha_1^p, \alpha_2^p, \alpha_3^p$, $p \in \cP$, such that \eqref{Stability:proposition:online alpha aposteriori:eq4} holds for all $n \in \N_0$. 
\end{proposition}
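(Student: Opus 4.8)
The plan is to reduce the assertion to the centralized Proposition~\ref{Stability:proposition:online alpha aposteriori} by exploiting that $\gamma$ is linear and that the dynamics \eqref{Setup and Preliminaries:system p} are decoupled. Since $\gamma(y)=\gamma^\top y$ with all $\gamma_p>0$, the proposed combined objects are the $\gamma$-weighted sums
\[
\VN(x)=\sum_{p\in\cP}\gamma_p\,\VpN(\xp),\qquad \l(x,u)=\sum_{p\in\cP}\gamma_p\,\lp(\xp,\up),
\]
and, because $f$ acts componentwise, the stacked feedback $\muN(x)=\big(\mupN(\xp,\ip)\big)_{p\in\cP}$ drives \eqref{Setup and Preliminaries:system} exactly along the vector $(\xp(\cdot))_{p\in\cP}$ of componentwise closed-loop trajectories \eqref{Setup and Preliminaries:eq:closed loop solution p}. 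I would first note that, although $\mupN(\cdot,\ip(n))$ depends on the neighbouring information and hence on $n$ and on the trajectory, Proposition~\ref{Stability:proposition:online alpha aposteriori} only uses inequality \eqref{Stability:proposition:online alpha aposteriori:eq1} \emph{along the given closed-loop solution}, so this dependence is harmless and one simply applies the proposition to the trajectory $x(\cdot)$ at hand.

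\textbf{Step 1: obtaining the centralized relaxed Lyapunov inequality.} I would multiply \eqref{Stability:proposition:online alpha aposteriori distributed:eq1} by $\gamma_p>0$ and sum over $p\in\cP$; inserting $\fp(\xp(n),\mupN(\xp(n),\ip(n)))=\xp(n+1)$ on the right, the left-hand side collapses to $\VN(x(n))$ and the right-hand side to $\VN(f(x(n),\muN(x(n))))+\alpha\,\l(x(n),\muN(x(n)))$. Hence \eqref{Stability:proposition:online alpha aposteriori:eq1} holds along the closed loop with the same $\alpha\in(0,1]$, and the first part of Proposition~\ref{Stability:proposition:online alpha aposteriori} yields \eqref{Stability:proposition:online alpha aposteriori:eq2} for this $\VN$ and $\l$, with $\Vinfty$ and $\Jinfty$ built from the same combined stage cost so that they too split into $\gamma$-weighted sums of the per-system quantities.

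\textbf{Step 2: building the comparison functions.} It remains to produce $\alpha_1,\alpha_2,\alpha_3\in\cK_\infty$ for the overall system out of the per-system $\alpha_i^p$ from \eqref{Stability:proposition:online alpha aposteriori distributed:eq2}. Writing $\xref=(\xpref)_{p\in\cP}$ and using that the coordinate projections $x\mapsto\xp$ are non-expansive for the product metric on $\X=\X_1\times\cdots\times\X_P$, one has $\|\xp\|_{\xpref}\le\|x\|_{\xref}$, so
\[
\VN(x)\le\sum_{p\in\cP}\gamma_p\,\alpha_2^p(\|x\|_{\xref})=:\alpha_2(\|x\|_{\xref}),
\]
and $\alpha_2\in\cK_\infty$ as a finite positive combination of $\cK_\infty$ functions. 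For the lower bounds I would use that the product metric satisfies $\|x\|_{\xref}\le c\,\max_{p\in\cP}\|\xp\|_{\xpref}$ for some constant $c\ge 1$ depending only on the metric and on $P$: letting $p^\ast$ attain this maximum and keeping only the $p^\ast$-th non-negative summand in $\VN$, the per-system lower bound together with monotonicity of $\alpha_1^{p^\ast}$ gives $\VN(x)\ge\gamma_{p^\ast}\,\alpha_1^{p^\ast}(\|x\|_{\xref}/c)\ge\min_{p\in\cP}\gamma_p\,\alpha_1^p(\|x\|_{\xref}/c)=:\alpha_1(\|x\|_{\xref})$, and likewise $\l(x,u)\ge\min_{p\in\cP}\gamma_p\,\alpha_3^p(\|x\|_{\xref}/c)=:\alpha_3(\|x\|_{\xref})$; both lie in $\cK_\infty$ as minima of finitely many $\cK_\infty$ functions. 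With \eqref{Stability:proposition:online alpha aposteriori:eq3} thus in force, the second part of Proposition~\ref{Stability:proposition:online alpha aposteriori} delivers $\beta\in\cKL$ depending only on $\alpha_1,\alpha_2,\alpha_3,\alpha$, hence only on $\gamma$, $\alpha$ and the $\alpha_1^p,\alpha_2^p,\alpha_3^p$, which is \eqref{Stability:proposition:online alpha aposteriori:eq4}.

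\textbf{Expected main obstacle.} The only genuine subtlety is the interplay with the product metric on $\X$, which the setup leaves unspecified: Step~2 needs both non-expansiveness of the coordinate projections and domination of $\|\cdot\|_{\xref}$ by a constant times the maximal coordinate distance. For the standard choices --- the $\max$-metric, or the $\ell^1$/Euclidean sums of the $d_{\Xp}$ --- both hold with $c\in\{1,P,\sqrt P\}$, so I would either fix such a metric or record these two properties as a standing assumption; everything else is the bookkeeping indicated above. (Working instead with the true centralized value function of the coupled problem would force an additional comparison with the $\gamma$-weighted sum, which is exactly what the composition device is meant to sidestep.)
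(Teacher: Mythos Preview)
Your Step~1 is exactly the paper's argument: the authors likewise combine the per-system inequalities \eqref{Stability:proposition:online alpha aposteriori distributed:eq1} via linearity of $\gamma$ to obtain the centralized relaxed Lyapunov inequality \eqref{Stability:proposition:online alpha aposteriori:eq1} for the composite $\VN$ and $\l$, and then invoke Proposition~\ref{Stability:proposition:online alpha aposteriori}.

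The difference lies in Step~2. The paper is much terser and simply sets $\alpha_i(r):=\gamma\big((\alpha_i^1(r),\ldots,\alpha_i^P(r))^\top\big)=\sum_{p}\gamma_p\,\alpha_i^p(r)$ for all three $i=1,2,3$, then appeals once more to Proposition~\ref{Stability:proposition:online alpha aposteriori}. Your min-construction for the lower bounds $\alpha_1,\alpha_3$ is actually a genuine refinement: the paper's uniform choice works cleanly for the upper bound $\alpha_2$ (since $\|\xp\|_{\xpref}\le\|x\|_{\xref}$ under non-expansive projections), but for the lower bounds one would need $\sum_p\gamma_p\,\alpha_1^p(\|\xp\|_{\xpref})\ge\sum_p\gamma_p\,\alpha_1^p(\|x\|_{\xref})$, which does not follow from the per-component estimates for standard product metrics. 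Your argument via the maximal coordinate distance and the minimum over $p$ makes this step rigorous, at the price of the mild product-metric assumptions you correctly flag as the main obstacle. So your route is the same in spirit but closes a gap the paper glosses over.
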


\begin{proof}
	Defining the abbreviations $V_\cP^N(x(n)) := ( V_1^N(x_1(n)), \ldots, V_P^N(x_P(n)))^\top$ and $\l_\cP(x(n), \muN(x(n), \i_\cP(n))) := ( \l_1(x_1(n), \mu_1^N(x_1(n), \i_1(n))), \ldots, \l_P(x_P(n), \mu_P^N(x_P(n), \i_P(n))) )^\top$ we combine all inequalities \eqref{Stability:proposition:online alpha aposteriori distributed:eq1} for $p \in \cP$ and obtain
	\begin{align}
		\label{Stability:proposition:online alpha aposteriori distributed:proof:eq1}
		\gamma( V_\cP^N(x(n)) ) \geq \gamma ( V_\cP^N(x(n + 1))) + \alpha \gamma ( \l_\cP(x(n), \muN(x(n), \i_\cP(n))) ) .
	\end{align}
	using linearity of $\gamma$. Now we use the definition of $\VN$ and $\l$ which gives us \eqref{Stability:proposition:online alpha aposteriori:eq1}. Hence,  \eqref{Stability:proposition:online alpha aposteriori:eq2} follows directly from Proposition \ref{Stability:proposition:online alpha aposteriori}. Similarly, \eqref{Stability:proposition:online alpha aposteriori:eq4} follows by definition of $\VN$ and $\l$ which together with $\alpha_i(r) := \gamma ( (\alpha_i^1(r), \ldots, \alpha_i^P(r))^\top )$, $i = 1, 2, 3$, and again Proposition \ref{Stability:proposition:online alpha aposteriori} shows the assertion.
\end{proof}

Certainly, condition \eqref{Stability:proposition:online alpha aposteriori distributed:eq1} would be desireable since it guarantees a decrease in $\VpN$ for each $p \in \cP$. In practice, however, one would usually expect $\VpN$ to decrease for some $p \in \cP$ while it increases for others as shown in the following example:

\begin{example}\label{Stability:example}
	Consider the setting of Example \ref{Setup and Preliminaries:example} where we suppose that each agent $p = 1, 2$ is optimizing using its running costs $\lp(\xp, \up) = \| \xp - \xpref \|_2^2$ with $\xref_1 = ( -2, 0)^\top$ and $\xref_2 = (2, 0)^\top$ and complete neighbouring information for initial conditions $\xzero_{1} = (1, 0)^\top$, $\xzero_{2}=(-1, 0)^\top$.\\
	Due to the constraints \eqref{Setup and Preliminaries:example:collision constraint}--\eqref{Setup and Preliminaries:example:passing constraint} one car has to wait before entering the bridge, cf. Figure \ref{Setup and Preliminaries:fig:carexample2}, and even has to move aside as shown in Figure \ref{Stability:fig:carexampleoptimal1}. Without loss of generality we assume that system $p = 2$ moves aside. 
	\begin{figure}[!ht]
		\begin{center}
			\subfloat[Optimal solution for $n = 1$ and $N = 1$\label{Stability:fig:carexampleoptimal1}]{\includegraphics[width=0.32\textwidth]{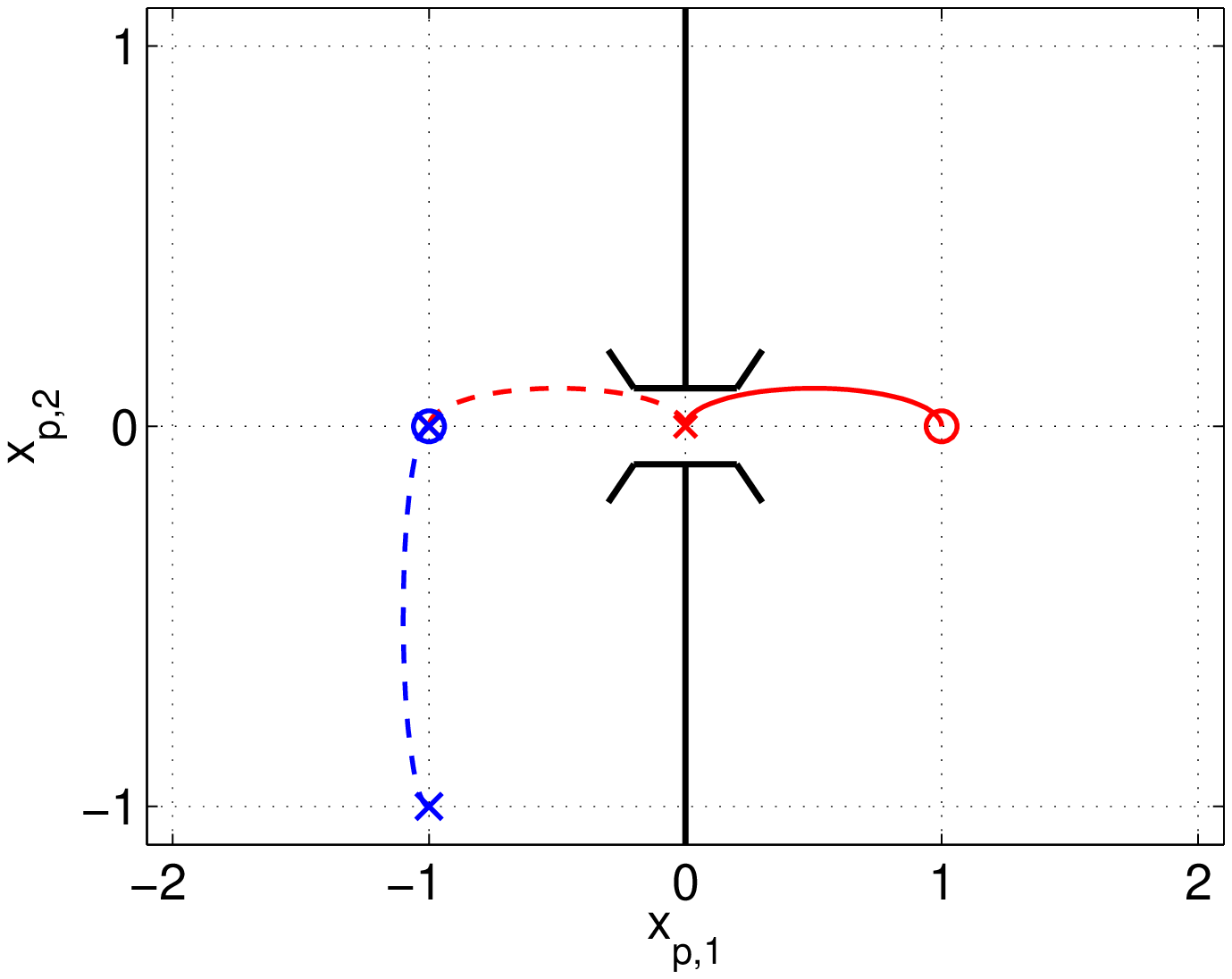}}
			\subfloat[Optimal solution for $n = 0$ and $N = 2$\label{Stability:fig:carexampleoptimal2}]{\includegraphics[width=0.32\textwidth]{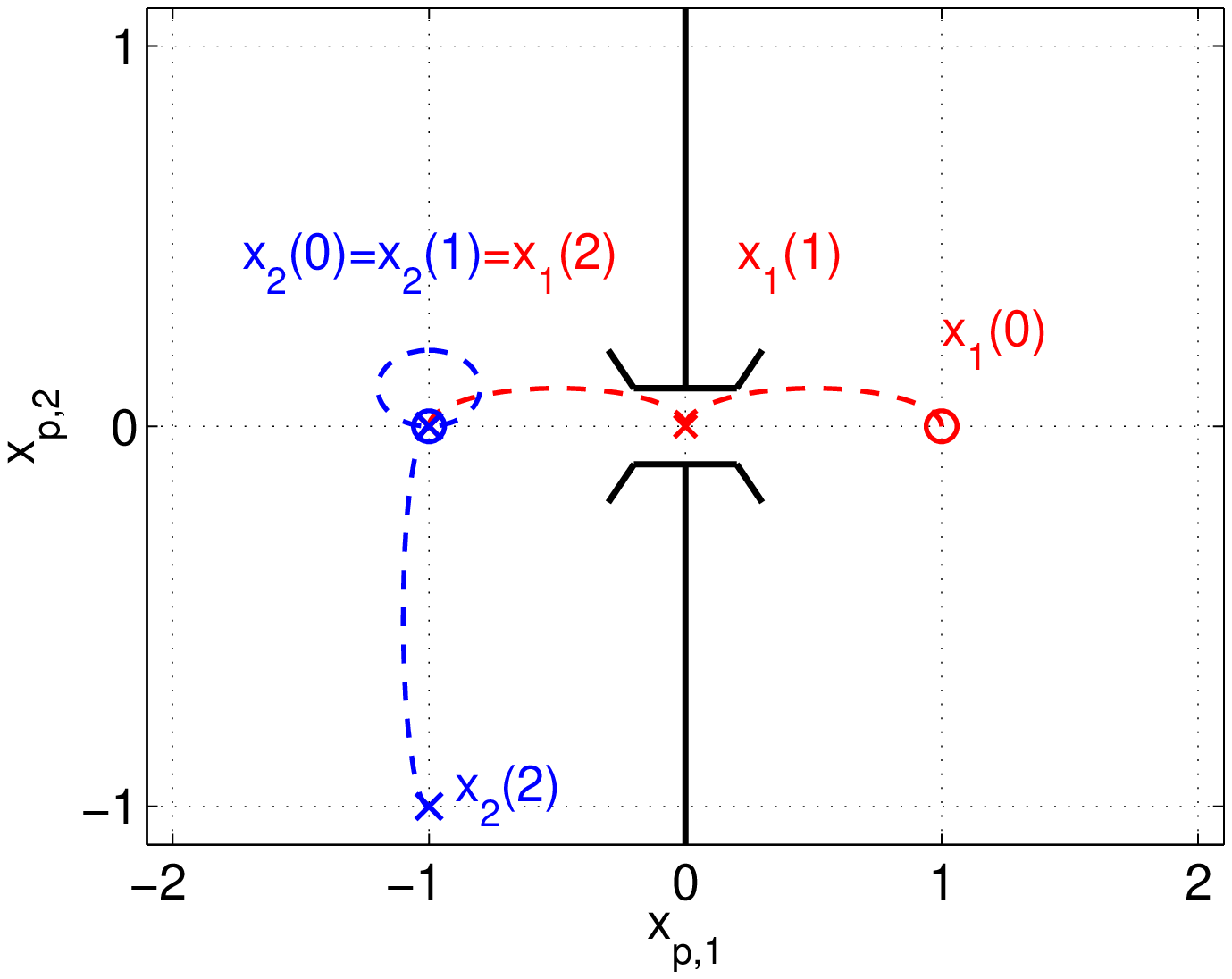}}
			\subfloat[Global optimal solution\label{Stability:fig:carexampleoptimal3}]{\includegraphics[width=0.32\textwidth]{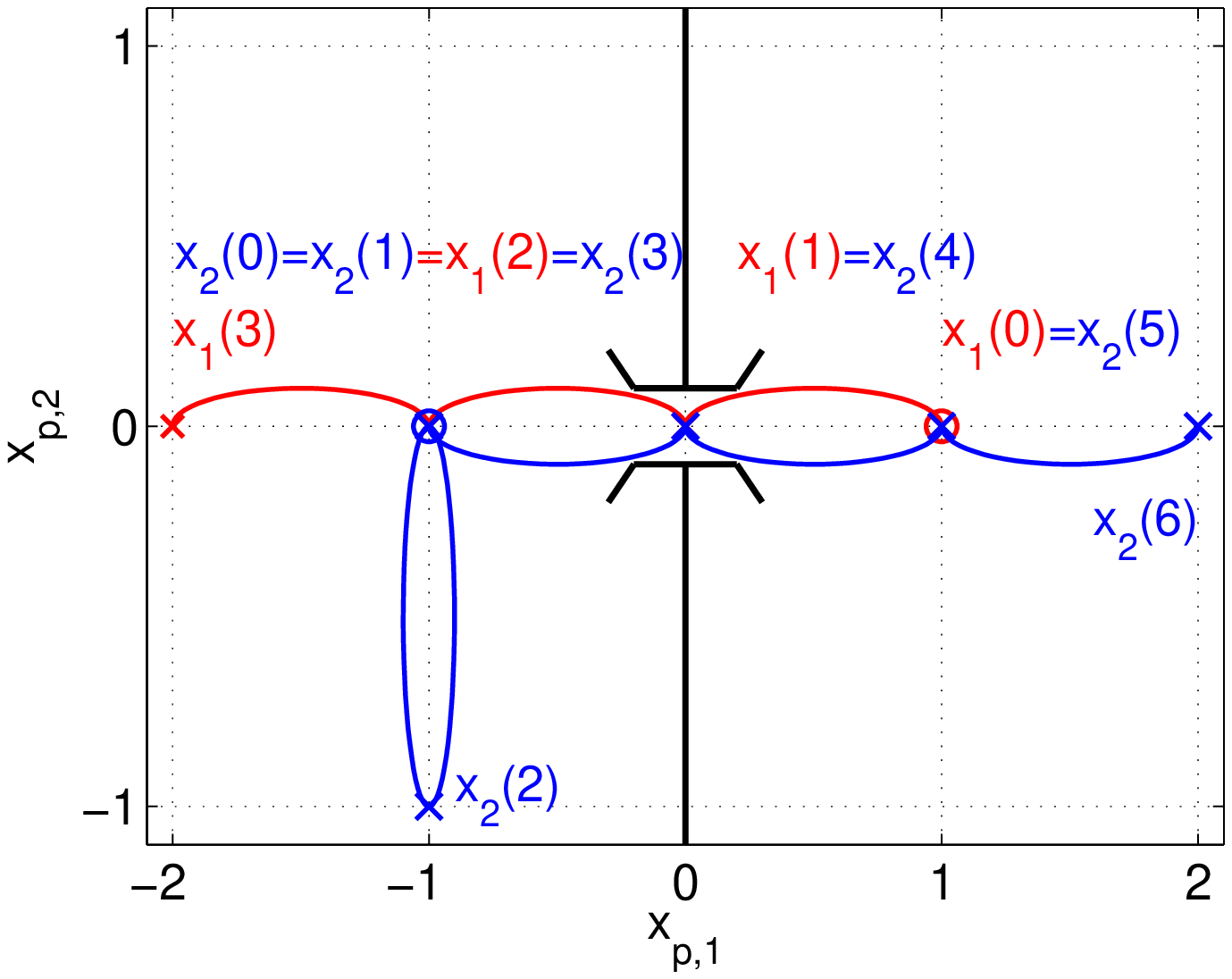}}
		\end{center}
		\caption{Optimal open and closed loop trajectories}\label{Stabiliy:fig:carexampleoptimal}
	\end{figure}		
	Although the optimal control for agent $p=2$ for any $N \geq 2$ is identical to the global optimal control, cf. Figures \ref{Stability:fig:carexampleoptimal2} and \ref{Stability:fig:carexampleoptimal3}, we obtain $\VN_2(x_2(0), \i_2(0)) \leq \VN_2(x_2(1), \i_2(1))$ for $N = 2$ and $N = 3$. Hence, we cannot guarantee \eqref{Stability:proposition:online alpha aposteriori distributed:eq1} to hold for these $N$ although the closed loop is stable. For larger values of $N$, however, we obtain
	\begin{align*}
		\VN_2(x_2(0), \i_2(0)) & = \begin{cases} 37 & \mbox{if } N = 4 \\ 41 & \mbox{if } N = 5 \\ 42 & \mbox{if } N \geq 6 \end{cases}
		&\quad \text{and} \quad 
		\l_2(x_2(0), \muN_2(x_2(0),\i_2(0))) & = 9, \displaybreak[0] \\
		\VN_2(x_2(1), \i_2(1)) & = \begin{cases} 32 & \mbox{if } N = 4 \\ 33 & \mbox{if } N \geq 5 \end{cases}
		&\quad \text{and} \quad 
		\l_2(x_2(1), \muN_2(x_2(1),\i_2(1))) & = 9,
	\end{align*}
	\begin{align*}
		\left( \VN_2(x_2(n), \i_2(n)) \right)_{n=2,\ldots,5, N \geq 4} & = ( 24, 10, 5, 1 )\quad \text{and} \\
		\left( \l_2(x_2(n), \muN_2(x_2(n),\i_2(n))) \right)_{n=2,\ldots,5, N \geq 4} & = ( 10, 9, 4, 1 )
	\end{align*}
	and $\VN_2(x_2(n), \i_2(n)) = \l_2(x_2(n), \muN_2(x_2(n),\i_2(n))) = 0$ if $N \geq 4$ and $n \geq 6$. Accordingly, the largest values $\alpha$ such that \eqref{Stability:proposition:online alpha aposteriori distributed:eq1} holds are $\alpha = 5/9$ if $N = 4$, $\alpha = 8/9$ if $N = 5$ and $\alpha = 1$ if $N \geq 6$. Hence, we can use Proposition \ref{Stability:proposition:online alpha aposteriori distributed} to conclude asymptotic stability if $N \geq 4$.
\end{example}

In an indeep analysis, \citet[Theorem 5.3]{GW2010} have shown conditions such that for the serial case using the algorithm of Richards and How \cite{RH2004, RH2007} inequalities \eqref{Stability:proposition:online alpha aposteriori distributed:eq1} and \eqref{Stability:proposition:online alpha aposteriori distributed:eq2} hold. Note that although the setting within the articles of \citet{RH2004, RH2007} is for one based on linear dynamics and secondly explicitely includes perturbations in the models, this algorithm can also be used in a nonlinear setting, cf. \citet[Proposition 3.2]{GW2010}. 

Taking a closer look at the proof of Proposition \ref{Stability:proposition:online alpha aposteriori distributed} we see that in fact conditions \eqref{Stability:proposition:online alpha aposteriori distributed:eq1} are only required to guarantee \eqref{Stability:proposition:online alpha aposteriori distributed:proof:eq1} to hold. Note that while condition \eqref{Stability:proposition:online alpha aposteriori distributed:eq1} requires a decrease in $\VpN$ for each $p \in \cP$, in \eqref{Stability:proposition:online alpha aposteriori distributed:proof:eq1} it suffices that $\VpN$, $p \in \cP$ is decreasing under a map $\gamma$. Moreover, we only require $\gamma \in \cK_\infty^P$ in the remainder of the proof of Proposition \ref{Stability:proposition:online alpha aposteriori distributed} to guarantee $\alpha_i \in \cK_\infty$, $i=1, 2, 3$. Accordingly, we obtain the following more general result:

\begin{proposition}\label{Stability:proposition:online alpha aposteriori distributed2}
	Consider feedback laws $\mupN: \bXp \times \bIp \rightarrow \bUp$ and closed loop trajectories $\xp(\cdot)$ of \eqref{Setup and Preliminaries:eq:closed loop solution p} with initial values $\xp(0) \in \bXp$ to be given. If the optimal value functions $\VpN:\bXp \rightarrow \R_{\geq 0}$ satisfy \eqref{Stability:proposition:online alpha aposteriori distributed:proof:eq1} for some $\alpha \in (0, 1]$, $\gamma \in \cK_\infty^P$ and all $n \in \N_0$, then \eqref{Stability:proposition:online alpha aposteriori:eq2} holds for all $n \in \N_0$ with $\VN$ and $\l$ defined as in Proposition \ref{Stability:proposition:online alpha aposteriori distributed}.

	If, in addition, for every $p \in \cP$ there exist $\alpha_1^p,\alpha_2^p,\alpha_3^p \in \cK_\infty$ such that \eqref{Stability:proposition:online alpha aposteriori distributed:eq2} holds for all $\xp(n) \in \bX$ with $n \in \N_0$, then there exists a function $\beta \in \cKL$ which only depends on $\gamma$, $\alpha$ and all $\alpha_1^p, \alpha_2^p, \alpha_3^p$, $p \in \cP$, such that \eqref{Stability:proposition:online alpha aposteriori:eq4} holds for all $n \in \N_0$. 
\end{proposition}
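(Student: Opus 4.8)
The plan is to replay the proof of Proposition \ref{Stability:proposition:online alpha aposteriori distributed} with $\gamma \in \cK_\infty^P$ in place of a linear weighting, checking that this weaker hypothesis still carries the argument through. As there, I set $\VN(x) := \gamma(\VcPN(x))$ and $\l(x,u) := \gamma(\lcP(x,u))$, write $\muN$ for the stacked feedback $(\mu_1^N(x_1,\i_1),\ldots,\mu_P^N(x_P,\i_P))$, and note that the closed loop of the overall system \eqref{Setup and Preliminaries:system} under $\muN$ coincides with the family of closed loops \eqref{Setup and Preliminaries:eq:closed loop solution p}.

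For the first assertion there is nothing to combine: with these definitions the hypothesis \eqref{Stability:proposition:online alpha aposteriori distributed:proof:eq1} is, read literally, the relaxed Lyapunov inequality \eqref{Stability:proposition:online alpha aposteriori:eq1} for $\VN$ and $\l$ along the given trajectory, so the first part of Proposition \ref{Stability:proposition:online alpha aposteriori} applies and yields \eqref{Stability:proposition:online alpha aposteriori:eq2} at once. This is precisely the point where the argument is shorter than for Proposition \ref{Stability:proposition:online alpha aposteriori distributed}: there, linearity of $\gamma$ (with positive coefficients) was needed only in order to deduce \eqref{Stability:proposition:online alpha aposteriori distributed:proof:eq1} from the componentwise inequalities \eqref{Stability:proposition:online alpha aposteriori distributed:eq1}, whereas here \eqref{Stability:proposition:online alpha aposteriori distributed:proof:eq1} is assumed outright.

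For the second assertion I would put $\alpha_i(r) := \gamma\big((\alpha_i^1(r),\ldots,\alpha_i^P(r))^\top\big)$ for $i = 1,2,3$ and first verify that each $\alpha_i$ is again of class $\cK_\infty$: it is continuous as a composition of continuous maps, $\alpha_i(0) = \gamma(0) = 0$, it is strictly increasing because every $\alpha_i^p$ is and $\gamma$ is strictly increasing in each component, and it is unbounded because the $\alpha_i^p$ and $\gamma$ are. This closure property is the one ingredient that genuinely replaces the trivial remark, used for Proposition \ref{Stability:proposition:online alpha aposteriori distributed}, that a positive linear combination of $\cK_\infty$ functions is $\cK_\infty$. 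Combining the componentwise bounds \eqref{Stability:proposition:online alpha aposteriori distributed:eq2} with the monotonicity of $\gamma$ then produces \eqref{Stability:proposition:online alpha aposteriori:eq3} for $\VN$, $\l$ and these $\alpha_i$ --- here one also needs the standard comparison on the product set $\bX$ between the distance $\|x\|_{\xref}$ and the factor distances $\|\xp\|_{\xpref}$, possibly absorbed into $\alpha_1,\alpha_2,\alpha_3$ by composing with further $\cK_\infty$ bounds, exactly as in the proof of Proposition \ref{Stability:proposition:online alpha aposteriori distributed}. The second part of Proposition \ref{Stability:proposition:online alpha aposteriori} then supplies a $\beta \in \cKL$ satisfying \eqref{Stability:proposition:online alpha aposteriori:eq4}, and since that $\beta$ is built only from $\alpha_1,\alpha_2,\alpha_3$ and $\alpha$, it depends only on $\gamma$, $\alpha$ and the $\alpha_i^p$, $p \in \cP$.

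I do not expect a real obstacle, since the statement is tailored so that its proof is essentially a re-run of the previous one; the step needing the most care is the verification that the $\alpha_i$ stay in $\cK_\infty$ under the nonlinear $\gamma$, together with the product-metric comparison required to phrase \eqref{Stability:proposition:online alpha aposteriori:eq3} in terms of $\|x\|_{\xref}$.
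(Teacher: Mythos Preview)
Your proposal is correct and matches the paper's approach exactly: the paper's proof is the single line ``Follows directly from the proof of Proposition \ref{Stability:proposition:online alpha aposteriori distributed}'', and the discussion preceding the statement already isolates precisely the two points you spell out --- that \eqref{Stability:proposition:online alpha aposteriori distributed:proof:eq1} is now assumed rather than derived from linearity, and that $\gamma \in \cK_\infty^P$ suffices for the composed $\alpha_i$ to lie in $\cK_\infty$. You have simply made explicit what the paper leaves implicit.
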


\begin{proof}
	Follows directly from the proof of Proposition \ref{Stability:proposition:online alpha aposteriori distributed}.
\end{proof}

The conclusion that can be drawn from Proposition \ref{Stability:proposition:online alpha aposteriori distributed2} is that the weighting function $\gamma$ may allow us to partially violate condition \eqref{Stability:proposition:online alpha aposteriori distributed:eq1}. Since \eqref{Stability:proposition:online alpha aposteriori distributed:eq1} is typically fulfilled if the horizon $\Np$ is large enough, a good choice of $\gamma$ may reduce the horizon length $\Np$ as we will see in the following example:

\begin{example}\label{Stability:example2}
	Consider Example \ref{Stability:example} and suppose $\gamma$ to be the 1--norm, then we obtain for $N = 2$
	\begin{align*}
		\left( \sum\limits_{p = 1}^2 \VpN(\xp(n), \ip(n)) \right)_{n=0, \ldots, 5} = ( 31, 24, 20, 13, 5, 1 ) \\
		\left( \sum\limits_{p = 1}^2 \lp(\xp(n), \mupN(\xp(n),\ip(n))) \right)_{n=0, \ldots, 5} = ( 18, 13, 11, 9, 4, 1 )
	\end{align*}
	and $\sum_{p = 1}^2 \VpN(\xp(n), \ip(n)) = \sum_{p = 1}^2 \lp(\xp(n), \mupN(\xp(n),\ip(n))) = 0$ for $n \geq 6$. Hence, \eqref{Stability:proposition:online alpha aposteriori distributed:proof:eq1} holds with $\alpha = 4/13$ and we obtain asymptotic stability of the closed loop by Proposition \ref{Stability:proposition:online alpha aposteriori distributed2}. Since $\alpha > 0$ holds for all $N \geq 2$ this example illustrates the advantage of considering condition \eqref{Stability:proposition:online alpha aposteriori distributed:proof:eq1} instead of \eqref{Stability:proposition:online alpha aposteriori distributed:eq1}.
\end{example}

As outlined before Proposition \ref{Stability:proposition:online alpha aposteriori distributed2}, under certain conditions the algorithm of \citet{RH2004, RH2007} can be applied to generate solutions such that \eqref{Stability:proposition:online alpha aposteriori distributed:eq1} and \eqref{Stability:proposition:online alpha aposteriori distributed:eq2} hold. However, the nature of this algorithm is serial, that is while one agent $p \in \cP$ is computing its control, all other agents $q \in \cP \setminus \{ p \}$ have to wait until agent $p$ finished computing. Hence, if the number of systems $P$ is large, such an algorithm may cause rather long waiting times, a feature which may be unwanted if fast sampling is used. Still, as noted in \citet[Section 7]{RH2007}, due to its decentralized nature the dimension of each problem is significantly smaller and hence the algorithm reduces the numerical effort compared to a centralized solution considerably. \\
Apart from the serial nature, the algorithm of Richards and How requires accessibility to the full neighbouring information, i.e. a full communication graph. Additionally, an agent $p \in \cP$ always uses the latest available neighbouring information to compute a minimizing control $\ups$ which results in a full dependency graph. While the latter condition on the dependency graph may be relaxed easily, it is a complex task to obtain a parallel algorithm and to relax the requirement of a full communication graph.

\section{The covering algorithm}
\label{Section:The covering algorithm}

In this section we provide a covering algorithm which is a modification of the Algorithm of \citet{RH2004, RH2007} and allows us to run the agents $p \in \cP$ in parallel if they are independent from one another. Unfortunately, working in a parallel distributed setting omits the use of standard techniques from optimization such as first and second order information of the cost functional and the constraints for the interlink between systems to search for optimal controls.

To circumvent this deficiency we introduce abstract maps $\Pi, \Theta: 2^\cP \to 2^\cP$ which denote priority and deordering rules, see \citet{H1989} and \citet{Ba1998}. The aim of this section is to show how much parallelism can be expected using the algorithm we propose next and the basic properties of $\Pi$, $\Theta$ being a permutation and a self concatenation mapping respectively. \\
The structural layout of the algorithm we present now is closely related to the NMPC algorithm outlined in Section \ref{Section:Setup and Preliminaries}:

\begin{algorithm}\label{The covering algorithm:alg:abstract}
	Set lists $\cP_1: = ( 1, \ldots, P )$ and $\cP_p := \emptyset$ for $p = 2, \ldots, P$, $n := 0$ and $\ip(n) := \emptyset$ for $p = 1, \ldots, P$.
	\begin{enumerate}
		\item[1.] Obtain new measurements $\xp(n)$ for $p \in \cP$.
		\item[2a.] \textit{(Decision memory and deordering rule)} For $i$ from $2$ to $P$ do
		\begin{enumerate}
			\item[] For $j$ from $1$ to $\sharp \cP_{i}$ do
			\begin{enumerate}
				\item[(i)] Set $\cI_{p_j}(n) := \Theta(\cI_{p_j}(n)) \subsetneq \cI_{p_j}(n)$
				\item[(ii)] If $\cI_{p_j}(n) = \emptyset$, then remove $p_j$ from $\cP_{i}$ and set $\cP_{1} := (\cP_{1}, p_j)$\\
				Else: If $\tilde{m} = \min_{k \in \cP_{m}, p_k \in \cI_{p_j}(n)} m < i$ holds, then remove $p_j$ from $\cP_{i}$ and set $\cP_{\tilde{m}} := (\cP_{\tilde{m}}, p_j)$
			\end{enumerate}
		\end{enumerate}
		\item[2b.] Compute a control $\ups(\cdot)$ minimizing \eqref{Setup and Preliminaries:cost functional infty} or \eqref{Setup and Preliminaries:cost functional p} with $\xpzero = \xp(n)$ and send information to all agents $q \in \{ q \in \cP \mid q \in \cP_j, p \in \cP_i \mbox{ and } j \geq i \}$ for $p \in \cP$ in parallel
		\item[2c.] \textit{(Priority rule)} For $i$ from $1$ to $P$ do
		\begin{enumerate}
			\item[(i)] If $\sharp \cP_{i} \in \{0, 1\}$, goto Step 3. \\
			Else: Sort index list by  setting $\cP_{i} := \Pi(\cP_{i})$
			\item[(ii)] For $j$ from $2$ to $\sharp \cP_{i}$ do
			\begin{enumerate}
				\item[] If system $p_{j}$ violates constraints imposed by systems $p_{k}$, $k < j$, then set $\cP_{i+1} := (\cP_{i+1} , j )$ and $\cI_{j}(n) := \cI_{j}(n) \cup \{ p_{k} \in \cP_{i} \setminus \cP_{i+1} \mid \text{$p_{k}$, $k < j$, induces constraints violated by system $p_{j}$}\}$ and set $\cP_{i} := \cP_{i} \setminus \cP_{i+1}$
			\end{enumerate}
			\item[(iii)] Compute a control $\ups(\cdot, \ip(n))$ minimizing \eqref{Setup and Preliminaries:cost functional infty} or \eqref{Setup and Preliminaries:cost functional p} for all $p \in \cP_{i+1}$ in parallel and send information to all agents $q \in \{ q \in \cP \mid q \in \cP_j, j \geq i \}$
		\end{enumerate}
		\item[3.] Implement $\mupNp(\xp(n), \ip(n)) := \ups(0)$, set $n := n + 1$ and goto Step 1.
	\end{enumerate}
\end{algorithm}
The general idea of the algorithm is to first generate priority lists $\cP_i$ of the systems according to the rule $\Pi$ and according to their interconnection with other systems, cf. Step 2c, -- just as the right-before-left rule in street traffic or the search direction in optimization methods. Secondly, these lists are used to remember earlier decisions which avoids generating periodic behaviour. This part of the algorithm, contained in Step 2a, is inspired by Bland's rule and the lexicographic ordering method used in the simplex algorithm to cope with degeneracy. Last, the deordering rule $\Theta$ which is used together with the memory in Step 2a offers a possibility to break up earlier decisions. Proceeding this way avoids blockages and reduces both the number of priority lists and thereby the numerical effort to compute the control sequences.

We like to mention that Algorithm \ref{The covering algorithm:alg:abstract} can be extended to an iterative computation of the controls $\ups$, $p \in \cP$. To this end only a few steps within the optimization method used to solve the problems of Steps 2b and 2c(iii) are performed. Additionally a second loop containing Steps 2b and 2c is introduced which is terminated if some stopping criterion like the suboptimality based criterion given in \citet{GP2010} is satisfied. Note that the algorithm also allows us to stop agents during such an iterative computation, i.e. if \eqref{Stability:proposition:online alpha aposteriori distributed:eq1} is satisfied for some $\alpha \geq \overline{\alpha} \in (0, 1)$. Since we allowed for using old and even outdated information in Definition \ref{Setup and Preliminaries:neighbouring information}, the algorithm even allows to block any computations of some agents for a certain period depending on the length of an agents prediction without compromising feasibility.

Given Algorithm \ref{The covering algorithm:alg:abstract}, we first consider the question whether a feasible feedback $\mupNp$ can be computed via Algorithm \ref{The covering algorithm:alg:abstract}:
\begin{theorem}\label{The covering algorithm:thm:feasibility}
	Assume a feasible initial value $x_0 \in \bX$ for system \eqref{Setup and Preliminaries:system} to be given. Suppose that for all $p \in \cP$ and all $n \in \N_0$ we have that the sets of admissible controls $\bUpad(n, \xp(n), \ip(n))$ in case of cost functional \eqref{Setup and Preliminaries:cost functional infty} or $\bUpNpad(n, \xp(n), \ip(n))$ in case of cost functional \eqref{Setup and Preliminaries:cost functional p} in Steps 2b and 2c(iii) are not empty, then the closed loop solutions \eqref{Setup and Preliminaries:eq:closed loop solution p} satisfy $x(n) = ( x_1(n)^\top, \ldots, x_P(n)^\top )^\top \in \bX$.
\end{theorem}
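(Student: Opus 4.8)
The plan is induction over the time index $n \in \N_0$, with the statement that the closed-loop state $x(n) = (x_1(n)^\top, \ldots, x_P(n)^\top)^\top$ lies in $\bX$. The base case $n = 0$ is the hypothesis $x_0 \in \bX$. For the inductive step, assume $x(n) \in \bX$ and follow one pass of Algorithm \ref{The covering algorithm:alg:abstract} at time $n$; by \eqref{Setup and Preliminaries:eq:feedback} and \eqref{Setup and Preliminaries:eq:closed loop solution p} the successor block $\xp(n+1) = \fp(\xp(n), \mupNp(\xp(n), \ip(n)))$ equals $\xpu(1, \xp(n))$ with $u = \ups$ the minimiser returned for agent $p$, so it remains to show $x(n+1) \in \bX$.

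First I would record that the pass through Steps 2a--2c terminates, so that every agent $p$ ends up in exactly one final list $\cP_{i(p)}$ with a final neighbouring information $\ip(n)$ and a minimiser $\ups$. This is precisely where the nonemptiness hypothesis enters: along the outer loop of Step 2c the dependency sets $\cIp(n)$ can only grow and $\cP$ is finite, so after finitely many promotions to a lower level the violation test in Step 2c(ii) no longer fires, while nonemptiness of $\bUpNpad(n, \xp(n), \ip(n))$ (resp.\ $\bUpad(n, \xp(n), \ip(n))$) guarantees that every (re)computation in Step 2b and Step 2c(iii) actually returns a control, so no agent stalls.

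The heart of the step is a secondary induction over the hierarchy level $i = 1, 2, \ldots$, proving the consistency property: for every agent $p$ with $i(p) = i$ and every $q \in \cIp(n,1)$ that $p$ accounted for, the predicted value $x_q^{n_q}(1 + n - n_q)$ appearing in the constraint of $\bUpNpad(n, \xp(n), \ip(n))$ equals the realised successor state $x_q(n+1)$. Such a $q$ sits at a strictly smaller level than $p$, and by the communication pattern of Steps 2b and 2c(iii) an agent broadcasts its trajectory only to agents at its own level or below, so $q$'s plan is already frozen when level $i$ is processed. If $q$'s information is fresh ($n_q = n$), then $x_q^{n_q}(1) = x_q(n+1)$ directly from \eqref{Setup and Preliminaries:eq:closed loop solution p}; if it is stale ($n_q < n$), the way old information of length $N_q$ is admitted in Definitions \ref{Setup and Preliminaries:neighbouring information} and \ref{Setup and Preliminaries:neighbouring prediction index set} forces $q$ to have committed to its earlier open-loop trajectory for all indices up to $n_q + N_q$, so again $x_q^{n_q}(1 + n - n_q) = x_q(n+1)$. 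Substituting these identities into the membership $(\xpu(1, \xp(n)), (x_q^{n_q}(1 + n - n_q))_{\cIp(n,1)}) \in \bX_{\{p\} \cup \cIp(n,1)}$ from Definition \ref{Setup and Preliminaries:set of admissible control sequences} yields $(\xp(n+1), (x_q(n+1))_{q \in \cIp(n,1)}) \in \bX_{\{p\} \cup \cIp(n,1)}$ for every $p \in \cP$.

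Finally one must assemble these partial memberships into the global statement $x(n+1) \in \bX$, and I expect this to be the main obstacle. By Definition \ref{Setup and Preliminaries:def:partial state contraints} each $\bX_{\{p\}\cup\cIp(n,1)}$ is only a projection of $\bX$, so a partial membership certifies only those constraints of $\bX$ that couple agents inside $\{p\}\cup\cIp(n,1)$; one therefore has to argue that the family of these index sets \emph{covers} every coupling present in the description of $\bX$. The intended mechanism is that if some constraint of $\bX$ coupling a set of agents were violated by $x(n+1)$, then at the stage of Step 2c at which those agents last shared a common priority list the test in Step 2c(ii) would have fired and promoted one of them, recording the others in its $\cI$, which contradicts termination. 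Turning this into a rigorous argument is where the structural hypotheses on the rules are needed --- that $\Pi$ is a permutation, so processing a list only reorders it and never discards an agent, and that $\Theta$ is a self-concatenation map, so Step 2a can only shrink $\cIp(n)$ to a still-admissible subset --- together with tracking how couplings migrate between lists as agents are promoted. Once the covering property is established, combining the partial memberships over all $p \in \cP$ with the definition of $\bX_{\cI}$ as a projection gives $x(n+1) \in \bX$ and closes the induction.
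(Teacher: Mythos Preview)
Your overall strategy --- induction on $n$, with the nonemptiness hypothesis supplying a minimiser for each agent at each step --- is exactly what the paper does. The paper's proof, however, is only a two-line sketch: from nonemptiness of the admissible sets it infers existence of the optimal controls in Steps~2b and~2c(iii), and then simply asserts that ``by definition of the closed loop in \eqref{Setup and Preliminaries:eq:closed loop solution p} and Step~3'' one has $x(1)\in\bX$, repeating inductively.

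Everything you add beyond this --- termination of the pass, the secondary induction over hierarchy levels, the consistency between predicted and realised neighbour states, and in particular the covering argument needed to upgrade the partial memberships $(\xp(n+1),(x_q(n+1))_{q\in\cIp(n,1)})\in\bX_{\{p\}\cup\cIp(n,1)}$ to the global statement $x(n+1)\in\bX$ --- is absent from the paper. You have correctly identified that the passage from local to global feasibility is the nontrivial point, and that Definition~\ref{Setup and Preliminaries:def:partial state contraints} only gives a projection, so one genuinely needs that the family $\{\{p\}\cup\cIp(n,1)\}_{p\in\cP}$ covers every coupling in $\bX$. The paper leaves this entirely implicit; your sketch of how Step~2c(ii) enforces the covering is a reasonable way to close the gap, and is more than the paper supplies. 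In short: your route is the paper's route, but you are filling in steps the paper chose not to spell out.
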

\begin{proof}
	Using $x_0 \in \bX$ and $\bUpad(0, \xp(0), \ip(0)) \not = \emptyset$ for all $p \in \cP$ in case of cost functional \eqref{Setup and Preliminaries:cost functional infty} or $\bUpNpad(0, \xp(0), \ip(0)) \not = \emptyset$ for all $p \in \cP$ in case of cost functional \eqref{Setup and Preliminaries:cost functional p}, we obtain from Steps 2b and 2c(iii) that optimal controls $\ups(\cdot, \ip(0))$ exist for all $p \in \cP$. Hence, by definition of the closed loop in \eqref{Setup and Preliminaries:eq:closed loop solution p} and Step 3 we obtain that $x(1) = ( x_1(1)^\top, \ldots, x_P(1)^\top )^\top \in \bX$ holds. Applying the same argumentation inductively for all $n \in \N_0$ the assertion follows.
\end{proof}

Before showing results for Algorithm \ref{The covering algorithm:alg:abstract} together with general priority and deordering rules $\Pi$, $\Theta$, we like to illustrate both rules using the example outlined in Section \ref{Section:Stability}. The idea of the priority rule is straight forward. In fact, we have already used it in Example \ref{Stability:example} to solve the blockage in the very first step:

\begin{example}
	Again consider Example \ref{Stability:example}. Due to the constraint sets $\bX$, $\bU$ and the dynamics of the systems $\fp$, one of the agents $p$ has to move aside first to let the system of the other agent pass by before it can proceed towards its desired equilibrium. Putting priority of agent $p = 1$ into a mathematical form, we see that $\Pi$ can be implemented as a lexicographic ordering, that is a list $\cL$ is mapped to its minimal permutation with respect to the dictionary ordering $<^d$ induced by the total orderings $\{ <_1, \ldots, <_m \}$ where $m$ is the length of the list $\cL$ and $<_i$, $i = 1, \ldots m$ is the usual ordering $<$ of the natural numbers $\N$.
\end{example}

Apart from the lexicographic ordering, also other heuristics like the greedy heuristic might be used.
It is not clear how the priority rule should be chosen in a nonlinear setting, and throughout this work we will not focus on this question but instead concentrate on general properties of Algorithm \ref{The covering algorithm:alg:abstract}. \\
The idea of the deordering rule $\Theta$ is more involved as it may interfere with the idea of keeping track of earlier decisions. The purpose of this rule is to reduce the number of the priority lists since Step 2c of Algorithm \ref{The covering algorithm:alg:abstract} is a serial call for all lists $\cP_i$. Accordingly, agents $p \in \cP_{i+1}$ always have to wait until all agents $p \in \cP_i$ have finished computing, a fact we wish to avoid. Note that this serial nature is independent from the parallel computation of control sequences $\ups$, $p \in \cP_i$. Using the deordering rule $\Theta$ allows us to ``test'' whether a system $p \in \cP_i$ still interferes with all systems $p \in \cP_k$, $k < i$, or if it can be inserted into a different priority list $\cP_k$, $k < i$, causing the number of lists and hence the number of non parallel steps to shrink. Yet even if system $p$ cannot be inserted in a different priority list, applying the deordering rule might still result in reducing the size of the neighbouring index set $\cIp(n)$. If this is the case, then the number of constraints of system $p$ is reduced which in turn reduces the numerical effort to compute the control sequence $\ups$.

\begin{example}
	Consider once more Example \ref{Stability:example} with $\Theta(\cP) = \emptyset$. Applying Algorithm \ref{The covering algorithm:alg:abstract} we obtain that $\us_2$ depends on the solution of system $p = 1$ for $n \in \{ 0, 1\}$ only whereas for all $n \geq 2$ both problems can be solved in parallel.
\end{example}

Turning towards the central point of this section, we now analyze how much parallelism is possible even if we do not know the exact sorting and testing operators $\Pi, \Theta$. Based on conditions on the priority lists $\cP_i$ our first result shows in which case all agents can compute their controls independently from each other:

\begin{lemma}\label{The covering algorithm:lem:independence}
	Suppose that for given systems \eqref{Setup and Preliminaries:system p}, maps $\Pi, \Theta: 2^\cP \to 2^\cP$ and $n \in \N_0$ we have that $\cP_2 = \emptyset$ holds in Step 2c(i) of Algorithm \ref{The covering algorithm:alg:abstract}. Then every agent $p \in \cP$ can compute its control sequence independently of all other agents $q \in \cP \setminus \{ p \}$.
\end{lemma}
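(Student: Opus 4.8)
The plan is to trace the three substeps of Algorithm \ref{The covering algorithm:alg:abstract} at the fixed instant $n$ and to show that, under the hypothesis, no control sequence is ever produced outside the single parallel call in Step 2b. The one structural fact that drives the argument is this: in Step 2b \emph{every} agent $p \in \cP$ computes its minimizing control sequence $\ups(\cdot)$ from its own measurement $\xp(n)$ together with the neighbouring information $\ip(n)$ it already holds, which by Definitions \ref{Setup and Preliminaries:neighbouring information}--\ref{Setup and Preliminaries:set of admissible control sequences} consists only of data computed at earlier instants $n_q \le n$; since all agents run Step 2b in parallel, the computation of agent $p$ cannot use anything an agent $q \neq p$ generates during Step 2b. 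Hence Step 2b introduces no dependence of $p$ on any $q \neq p$ at time $n$, and one checks that Step 2a is pure bookkeeping on the lists $\cP_i$ and likewise forces no waiting. The only mechanism in the algorithm that can make one agent wait for another, or recompute from freshly received data, is the serial sweep of Step 2c over $\cP_1, \cP_2, \dots$, because Step 2c(iii) recomputes the controls of the agents in $\cP_{i+1}$ only after the agents in $\cP_i$ have finished and communicated.

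Next I would show that under the hypothesis this serial sweep does nothing. When Step 2c is entered with $i = 1$: if $\sharp \cP_1 \in \{0,1\}$ the algorithm jumps to Step 3 at once; otherwise $\cP_1$ is reordered by $\Pi$ and the violation test of Step 2c(ii) is run, but reading ``$\cP_2 = \emptyset$ in Step 2c(i)'' as the statement that no index has been appended to $\cP_2$ by the time Step 2c(i) is reached with $i = 2$, the $i=1$ pass of Step 2c(ii) inserts nothing into $\cP_2$, so Step 2c(iii) issues the recomputation for the empty set $\cP_2$ and produces nothing. The loop then advances to $i = 2$, where $\sharp \cP_2 = 0 \in \{0,1\}$ and Step 2c(i) sends the algorithm to Step 3. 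In particular the sweep never reaches a list $\cP_i$ with $i \ge 3$, so the contents of those lists are immaterial; their agents, like all others, already obtained their control sequences in Step 2b.

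Finally I would combine the two observations: for every $p \in \cP$ the feedback value $\mupNp(\xp(n),\ip(n)) = \ups(0)$ implemented in Step 3 is exactly the one computed in the single parallel call of Step 2b from $\xp(n)$ and the already-available $\ip(n)$, with no intervening communication or waiting. Therefore each agent carries out its computation independently of all $q \in \cP \setminus \{p\}$, which is the claim. Applying the same reasoning at every such $n$ gives the statement as written.

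The step I expect to require the most care is pinning down the precise meaning of ``$\cP_2 = \emptyset$ in Step 2c(i)'' and verifying that it genuinely excludes every recomputation: concretely, that it forbids the $i=1$ pass of Step 2c(ii) from ever inserting an index into $\cP_2$, and hence that the serial loop terminates at $i=2$ before touching $\cP_3, \dots, \cP_P$. It is also worth making explicit that ``independently'' here means the absence of any synchronization or fresh information exchange at time $n$ — agent $p$ may still carry stale data about $q$ inside $\ip(n)$ — so that the statement is not in tension with the possibility $\ip(n) \neq \emptyset$.
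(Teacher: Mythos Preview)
Your argument is internally consistent but it proves a weaker statement than the paper intends, and it bypasses the inference the paper actually uses. You read ``independently'' operationally: no waiting, no fresh communication at time $n$, so the parallel Step~2b output survives to Step~3. The paper reads ``independently'' structurally: the optimization problem of agent $p$ is decoupled from every other agent, i.e.\ the admissible control set contains no coupling constraint at all. The paper's subsequent text makes this reading explicit when it says that under $\cP_2=\emptyset$ the control ``can be computed without having to consider any other system $q\in\cP\setminus\{p\}$''.

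Concretely, the paper's proof runs as follows: $\cP_2=\emptyset$ in Step~2c(i) means that the violation test of Step~2c(ii) at level $i=1$ pushed nobody into $\cP_2$; hence for no pair $p_1,p_2\in\cP_1$ does $p_1$ induce a constraint violated by $p_2$, and therefore $\cIp(n)=\emptyset$ for every $p\in\cP$. From $\cIp(n)=\emptyset$ the set $\bUpad$ (respectively $\bUpNpad$) collapses to the purely local set
\[
\{\up(\cdot)\mid \up(k)\in\bUp,\ \xpu(k,\xpzero)\in\bXp\ \text{for all }k\},
\]
which is the conclusion.

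You explicitly keep open the possibility $\ip(n)\neq\emptyset$ (``stale data''), which is the opposite of what the paper derives. So the step you flag as needing the most care --- whether $\cP_2=\emptyset$ merely stops the serial sweep or actually forces $\cIp(n)=\emptyset$ --- is precisely where your argument diverges: the paper takes the latter route, and that is what gives the decoupled admissible set. Your operational argument would still allow agent $p$'s optimization in Step~2b to depend on predictions $x_q^{n_q}(\cdot)$ of neighbours $q\in\cIp(n)$, which is not independence in the paper's sense.
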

\begin{proof}
	Since $\cP_2 = \emptyset$ Step 2c(ii) of Algorithm \ref{The covering algorithm:alg:abstract} guarantees that there are no systems $p_1, p_2 \in \cP_1$, $p_1 \not = p_2$, such that $p_1$ induces a constraint on $p_2$ which is violated by $p_2$, i.e. $\cIp(n) = \emptyset$ for all $p \in \cP$. Hence, for each agent $p \in \cP$ the set of admissible controls simplifies to
	\begin{align*}
		\bUpad(n, \xpzero, \ip(n)) = \{ \up(\cdot) \in \Up^{\N_0} \mid & \mbox{ $\up(k) \in \bUp$ and $\xpu(k, \xpzero) \in \bXp$ for all $k \in \N_0$} \}
	\end{align*}
	if cost functional \eqref{Setup and Preliminaries:cost functional infty} or
	\begin{align*}
		\bUpNpad(n, \xpzero, \ip(n)) = \{ \up(\cdot) \in \Up^{\Np} \mid & \mbox{ $\up(k) \in \bUp$ and $\xpu(k, \xpzero) \in \bXp$} \\
		& \mbox{for all $k \in \{0, \ldots, \Np\}$} \}
	\end{align*}
	if cost functional \eqref{Setup and Preliminaries:cost functional p} is considered with $\xpzero = \xp(n)$ showing the assertion.
\end{proof}

Using the self-concatenation property of the map $\Theta$, we can also show that under certain conditions the priority lists show dependency of agents:

\begin{lemma}\label{The covering algorithm:lem:dependence}
	Consider systems \eqref{Setup and Preliminaries:system p}, $P \geq 2$ to be given. Suppose that applying Algorithm \ref{The covering algorithm:alg:abstract} for given maps $\Pi, \Theta: 2^\cP \to 2^\cP$ we have that $\cP_i \not = \emptyset$ with $i \geq 2$ holds for some $n \geq \overline{n}$ and all $\overline{n} \in \N_0$. Then for each system $p \in \cP_i$ there exists at least one system $q \in \cP_j$, $j < i$ such that $q \in \cIp(n)$. Moreover, in case cost functional \eqref{Setup and Preliminaries:cost functional infty} is used, we have
	\begin{align*}
		\ups = \argmin_{\up \in \bUpad(n, \xp(n), \emptyset)} \Jpinfty(\xp(n), \up) \not \in \bUpad(n, \xp(n), \ip(n)) \subsetneq \bUpad(n, \xp(n), \emptyset)
	\end{align*}
	and in case of cost functional \eqref{Setup and Preliminaries:cost functional p} we have
	\begin{align*}
		\ups = \argmin_{\up \in \bUpNpad(n, \xp(n), \emptyset)} \JpNp(\xp(n), \up) \not \in \bUpad(n, \xp(n), \ip(n)) \subsetneq \bUpNpad(n, \xp(n), \emptyset).
	\end{align*}
\end{lemma}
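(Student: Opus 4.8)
The plan is to unwind the definitions and trace through Steps 2c(ii) and 2a of Algorithm \ref{The covering algorithm:alg:abstract} to see why a non-empty list $\cP_i$ with $i \geq 2$ that persists forces a genuine dependency, and then observe that a genuine dependency forces the strict inclusion of the admissible control sets and expels the unconstrained minimizer. I would split the argument into two parts, matching the two assertions of the lemma.

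First I would establish the dependency claim: for $p \in \cP_i$ with $i \geq 2$ there is $q \in \cP_j$, $j < i$, with $q \in \cIp(n)$. The key is to read off how an index ends up in list $\cP_i$ at all. A system $p$ is only placed into $\cP_{i+1}$ in Step 2c(ii) (or kept at a deeper level through Step 2a), and at the moment $p$ is moved down from $\cP_i$ to $\cP_{i+1}$ the algorithm simultaneously augments $\cIp(n)$ by those systems $p_k \in \cP_i$, $k<j$, that induced a violated constraint; so right after that step $\cIp(n) \neq \emptyset$ and contains an index sitting in a strictly lower-indexed list. I would then argue this property is an invariant under the remaining operations: the deordering rule in Step 2a only ever \emph{shrinks} $\cIp(n)$ via $\Theta$ (since $\Theta(\cIp(n)) \subsetneq \cIp(n)$), and by its definition in Step 2a(ii) if the shrunk set becomes empty then $p$ is moved back up into $\cP_1$, while if the minimal list index $\tilde m$ of a remaining neighbour drops below $i$ then $p$ is likewise moved into that lower list. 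Hence, as long as $p$ actually \emph{remains} in some $\cP_i$ with $i \geq 2$ — which is exactly the hypothesis $\cP_i \neq \emptyset$ for $n \geq \overline n$, for every $\overline n$, read as "$p$ stays there" — the set $\cIp(n)$ must still be non-empty and still contain a neighbour $q$ lying in a list $\cP_j$ with $j < i$. This gives the first assertion.

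For the second assertion I would use Definition \ref{Setup and Preliminaries:set of admissible control sequences}. Since $\cIp(n) \neq \emptyset$, there is some prediction step $k$ with $\cIp(n,k) \neq \emptyset$ (namely $k$ small enough that $n+k \leq n_q+N_q$ for the neighbour $q$ found above, using that $q$'s broadcast information has positive length — this is where Step 2b/2c(iii) sending nonempty state sequences is used). Therefore the admissible set $\bUpad(n,\xp(n),\ip(n))$ carries the extra constraint $(\xpu(k,\xp(n)), (x_{q'}^{n_{q'}}(k+n-n_{q'}))_{\cIp(n,k)}) \in \bX_{\{p\}\cup\cIp(n,k)}$ on top of $\up(k) \in \bUp$ and $\xpu(k,\xp(n)) \in \bXp$, which is precisely the defining constraint set of $\bUpad(n,\xp(n),\emptyset)$ by Lemma \ref{The covering algorithm:lem:independence}'s simplification; hence $\bUpad(n,\xp(n),\ip(n)) \subseteq \bUpad(n,\xp(n),\emptyset)$. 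The inclusion is \emph{strict} because $p$ was moved to a deeper list in Step 2c(ii) exactly when system $p$, running its chosen (unconstrained-with-respect-to-$q$) control, \emph{violated} the constraint induced by $q$ — i.e. the minimizer over the larger set is not in the smaller one. This simultaneously proves strictness of the inclusion and the non-membership $\ups \not\in \bUpad(n,\xp(n),\ip(n))$, and the same wording applies verbatim to the finite-horizon case with $\bUpNpad$ and $\JpNp$.

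\textbf{The main obstacle} I anticipate is making the "persistence" argument rigorous, i.e. pinning down the exact sense in which the hypothesis "$\cP_i \neq \emptyset$ with $i \geq 2$ holds for some $n \geq \overline n$ and all $\overline n \in \N_0$" is to be used, and then showing the invariant "while $p$ stays in $\cP_i$, $\cIp(n)$ stays non-empty with a lower-list neighbour" is genuinely preserved by \emph{both} Step 2a (deordering/memory) and Step 2c (priority). The subtle point is that Step 2a could in principle shrink $\cIp(n)$ all the way without relocating $p$ only if $\Theta$ failed the self-concatenation hypothesis; so the proof must explicitly invoke that $\Theta$ is a self-concatenation map together with the relocation branch in Step 2a(ii) to exclude the degenerate possibility that $p$ lingers in $\cP_i$ with an empty neighbour set. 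Everything after that is bookkeeping with the definitions of $\cIp(n,k)$ and $\bUpad$.
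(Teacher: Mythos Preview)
Your proposal is correct and follows essentially the same route as the paper. The paper's proof is more compressed: for the first claim it argues by contradiction --- assuming no such $q$ exists, the deordering rule $\Theta$ together with Step~2a would eventually empty $\cP_i$, contradicting persistence --- while you phrase the same mechanism as an invariant preserved by Steps~2a and~2c(ii); for the second claim both you and the paper appeal directly to Step~2c(ii) and the fact that $q \in \cIp(n)$ records a constraint that the unconstrained optimizer violates.
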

\begin{proof}
	Suppose that $\cP_i \not = \emptyset$ with $i \geq 2$ holds for some $n \geq \overline{n}$ and all $\overline{n} \in \N_0$ and fix $p \in \cP_i$ arbitrarily. Suppose furthermore that there exists no $q \in \cP_j$, $j < i$ such that $q \in \cIp(n)$ holds. Then, by the deordering rule $\Theta$ and Step 2a(i) we obtain that there exists $\overline{n} \in \N_0$ such that $\cP_i = \emptyset$ for all $n \geq \overline{n}$ contradicting our assumption. Hence, since $p \in \cP_i$ was chosen arbitrarily, we obtain that for each $p \in \cP_i$ there exists a system $q \in \cP_j$, $j < i$ such that $q \in \cIp(n)$ holds. \\
	Now, due to Step 2c(ii) and the fact that there exists a system $q \in \cIp(n)$ imposing constraints on system $p$ which are violated if $q \not \in \cIp(n)$ the assertion for both cost functionals \eqref{Setup and Preliminaries:cost functional infty} and \eqref{Setup and Preliminaries:cost functional p} follows.
\end{proof}

Now we can use Lemma \ref{The covering algorithm:lem:dependence} to answer the question under which conditions asymptotic stability can be shown. In particular, we first prove a necessary condition for asymptotic stability of \eqref{Setup and Preliminaries:system}.

\begin{theorem}\label{The covering algorithm:thm:no beta}
	Consider systems \eqref{Setup and Preliminaries:system p}, $P \geq 2$ to be given. Suppose that applying Algorithm \ref{The covering algorithm:alg:abstract} for all maps $\Pi, \Theta: 2^\cP \to 2^\cP$ we have that $\cP_i \not = \emptyset$, $i \geq 2$ holds for some $n \geq \overline{n}$ and all $\overline{n} \in \N_0$ with $\cIp(n, 1) \not = \emptyset$ for some $p \in \cP_i$. Then there exists no function $\beta \in \cKL$ such that \eqref{Stability:proposition:online alpha aposteriori:eq4} holds for all $n \in \N_0$.
\end{theorem}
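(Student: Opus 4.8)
The statement is a negative result: if, for *every* choice of priority and deordering rules, the algorithm still produces a nonempty list $\cP_i$ with $i\ge 2$ containing an agent $p$ whose constraint at the very next step is active ($\cIp(n,1)\neq\emptyset$), then no $\cKL$-bound of the form \eqref{Stability:proposition:online alpha aposteriori:eq4} can hold. The natural strategy is proof by contradiction: assume such a $\beta\in\cKL$ exists, and use the hypothesis to construct an initial condition (or a time along the closed loop) at which the bound is necessarily violated. The key mechanism is Lemma \ref{The covering algorithm:lem:dependence}: under the stated assumption it applies verbatim and tells us that for the offending agent $p\in\cP_i$ there is a genuinely constraining neighbour $q\in\cP_j$, $j<i$, so that the unconstrained optimal control $\ups$ is \emph{infeasible} for the constrained problem, i.e. $\bUpad(n,\xp(n),\ip(n))\subsetneq\bUpad(n,\xp(n),\emptyset)$ (and likewise for the finite-horizon functional). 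The extra ingredient over Lemma \ref{The covering algorithm:lem:dependence} is the one-step-activity condition $\cIp(n,1)\neq\emptyset$: this says the coupling constraint with $q$ binds already on the transition from $n$ to $n+1$, so the move agent $p$ is forced to make differs from the move it would make if it ignored $q$.

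First I would fix $\overline n$ arbitrary, invoke the hypothesis to obtain $n\ge\overline n$ and $p\in\cP_i$, $i\ge2$, with $\cIp(n,1)\neq\emptyset$, and apply Lemma \ref{The covering algorithm:lem:dependence} to get the strict inclusion of admissible-control sets and the corresponding neighbour $q$. Then I would argue that, because the coupling is active at the one-step level, agent $p$ is pushed away from its locally optimal trajectory by a nonvanishing amount that does \emph{not} shrink as $\xp(n)$ approaches $\xpref$ — indeed the constraint is imposed by $q$'s state, which need not be near $\xpref$, so the forced deviation $\|\xp(n+1)\|_{\xpref}$ is bounded below by a quantity independent of $\|\xp(n)\|_{\xpref}$. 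Concretely one exhibits, using that $\overline n$ was arbitrary, a sequence of times $n_k\to\infty$ at which $\|x(n_k)\|_{\xpref}\ge c>0$ while (if $\beta$ existed) one would need $\|x(n_k)\|_{\xpref}\le\beta(\|x(0)\|_{\xpref},n_k)\to0$, a contradiction. The role of ``for all maps $\Pi,\Theta$'' is precisely to guarantee that this persistent activity cannot be eliminated by any relabelling or deordering, so the obstruction is structural rather than an artefact of a bad rule.

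The main obstacle I anticipate is making the quantitative lower bound on the forced deviation rigorous and rule-independent: one must show that $\cIp(n,1)\neq\emptyset$ together with the strict set inclusion from Lemma \ref{The covering algorithm:lem:dependence} really forces $\xp(n+1)\neq\fp(\xp(n),\ups(0))$ with $\ups$ the unconstrained optimizer, and then translate this into a distance bound that survives taking $\overline n\to\infty$. This needs the discreteness or at least non-degeneracy of the constraint coupling (as in Example \ref{Setup and Preliminaries:example}, where moves live in a finite set), or an explicit assumption that the active coupling constraint separates $\xpref$ from the forced successor state by a fixed margin; without such structure the deviation could in principle decay and $\beta$ could survive. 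I would therefore either (i) carry out the argument under the standing constraint structure of the running example and remark on the general case, or (ii) phrase the contradiction purely in terms of the impossibility of $\xp(n)\to\xpref$ along the closed loop — since persistent infeasibility of the unconstrained optimum at arbitrarily late times $n$ is itself incompatible with $\xp(n)$ entering and staying in a neighbourhood of $\xpref$ on which the coupling with $q$ would be inactive — thereby ruling out \eqref{Setup and Preliminaries:def:stability:eq1} and hence \eqref{Stability:proposition:online alpha aposteriori:eq4} directly.
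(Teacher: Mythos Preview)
Your proposal is correct and follows essentially the same route as the paper: fix $\Pi,\Theta$, invoke Lemma~\ref{The covering algorithm:lem:dependence} to obtain the strict inclusion of admissible-control sets, use $\cIp(n,1)\neq\emptyset$ to force a one-step deviation between the constrained and unconstrained closed-loop successors, and then contradict the $\cKL$ decay via the recurrence guaranteed by the arbitrary-$\overline n$ hypothesis. The concern you flag about a uniform lower bound on the deviation is handled in the paper exactly as in your option~(i) --- the proof simply asserts the existence of fixed $\delta_1,\delta_2>0$ (together with a separate case distinction for $\xp(n+1)=\xpref$, where the deviation is deferred to a later $\tilde n$) and proceeds --- so your instinct that this step leans on implicit non-degeneracy of the constraint coupling is accurate rather than a defect of your argument.
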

\begin{proof}
	Fix maps $\Pi, \Theta: 2^\cP \to 2^\cP$. Then Lemma \ref{The covering algorithm:lem:dependence} states that for each system $p \in \cP_i$ there exists a system $q \in \cP_j$, $j < i$ such that $q \in \cIp(n)$. If for any $p \in \cP$ and any $n \in \N_0$ we have that $\bUpad(n, \xp(n), \ip(n)) = \emptyset$ or $\bUpNpad(n, \xp(n), \ip(n)) = \emptyset$ in case if cost functional \eqref{Setup and Preliminaries:cost functional infty} or \eqref{Setup and Preliminaries:cost functional p} are used, respectively, we are done since no admissible solution exists. Otherwise, we obtain ${\ups}^1(\cdot) \not = {\ups}^2(\cdot)$ with
	\begin{align*}
		{\ups}^1(\cdot) & = \argmin_{\up \in \bUpad(n, \xp(n), \ip(n))} \Jpinfty(\xp(n), \up), \quad {\ups}^2(\cdot) & = \argmin_{\up \in \bUpad(n, \xp(n), \emptyset)} \Jpinfty(\xp(n), \up)
	\end{align*}
	in case of cost functional \eqref{Setup and Preliminaries:cost functional infty} and with
	\begin{align*}
		{\ups}^1(\cdot) & = \argmin_{\up \in \bUpNpad(n, \xp(n), \ip(n))} \JpNp(\xp(n), \up), \quad {\ups}^2(\cdot) & = \argmin_{\up \in \bUpNpad(n, \xp(n), \emptyset)} \JpNp(\xp(n), \up)
	\end{align*}
	in case of cost functional \eqref{Setup and Preliminaries:cost functional p}.\\
	Hence, due to the fact that $\xp^{{\ups}^2}(k, x(n))$ for some $k$ violates a constraint imposed by system $q$ which is not violated by $\xp^{{\ups}^1}(k, x(n))$, we obtain that the open loop trajectories $\xp^{{\ups}^1}(\cdot, x(n))$ and $\xp^{{\ups}^2}(\cdot, x(n))$ differ. Using $\cIp(n, 1) \not = \emptyset$, we can conclude that there exists a $\delta_1 > 0$ such that $d_\X( \xp^{{\ups}^1}(1, \xp(n)), \xp^{{\ups}^2}(1, \xp(n)) ) > \delta_1$ holds. Since we always implement the first element of each optimal admissible control, we have that $d_\X ( \fp(\xp(n), {\ups}^1(0)), \fp(\xp(n), {\ups}^2(0)) ) > \delta_1$ holds. Now we have to consider two cases: If $\xp(n + 1) = \xpref$, then we can use the fact that the deviation $d_\X( \xp^{{\ups}^1}(1, \xp(\tilde{n})), \xp^{{\ups}^2}(1, \xp(\tilde{n})) ) > \delta_1$ will occur again for some $\tilde{n} > n$ due to the assumptions of the theorem. If $\xp(n + 1) \not = \xpref$, we immediately obtain the existence of a $\delta_2 > 0$ such that $\| \xp(n + 1) \|_{\xpref} > \delta_2$ holds. In either case, we obtain that there exists a time index $\tilde{n} > n$ such that $\| \xp(\tilde{n}) \|_{\xpref} > \delta = \min(\delta_1/2, \delta_2)$ holds. \\
	Now suppose there exists a function $\beta \in \cKL$ such that \eqref{Stability:proposition:online alpha aposteriori:eq4} holds for all $n \in \N_0$. Due to the $\cL$-property $\beta$ in its second argument, we have that for each $\varepsilon > 0$ there exists a $\hat{n} \in \N_0$ such that $\| x(n) \|_{\xref} < \varepsilon$ for all $n \geq \hat{n}$. Now we choose $\varepsilon < \delta$ and $\hat{n} \in \N_0$ accordingly. Since Lemma \ref{The covering algorithm:lem:dependence} holds for all $\overline{n} \in \N_0$, we can conclude that for $\tilde{n} > n \geq \overline{n} = \hat{n}$ the inequality $\| x(\tilde{n}) \|_{\xref} \geq \| \xp(\tilde{n}) \|_{\xpref} > \delta  > \varepsilon$ holds. This contradicts the existence of a function $\beta \in \cKL$ such that \eqref{Stability:proposition:online alpha aposteriori:eq4} holds for all $n \in \N_0$.\\
	Last, since the maps $\Pi$ and $\Theta$ were chosen arbitrarily, the argumentation holds for all choices of $\Pi$ and $\Theta$ which completes the proof.
\end{proof}

\begin{remark}
	Condition $\cIp(n, 1) \not = \emptyset$ in Theorem \ref{The covering algorithm:thm:no beta} is required since from $q \in \cIp(n)$ we can only conclude that $\xp^{{\ups}^1}(k_n, \xp(n))$ and $\xp^{{\ups}^2}(k_n, \xp(n))$ differ for some $k_n \geq 0$. According to the NMPC algorithm, only the first control element is implemented and we may face the situation that again $\xp^{{\ups}^1}(k_{n+1}, \xp(n + 1))$ and $\xp^{{\ups}^2}(k_{n+1}, \xp(n + 1))$ differ for some $k_{n+1} \geq k_n$. Now if $k_n > 0$ holds for all $n \in \N_0$, then system $p$ may be asymptotically stable.
\end{remark}

Turning from necessary to sufficient conditions we like to stress that the converse of Theorem \ref{The covering algorithm:thm:no beta} does not hold, not even in the special case that the conditions of Lemma \ref{The covering algorithm:lem:independence} hold for all $n \geq \overline{n}$ with $\overline{n} \in \N_0$. This conclusion is due to the fact that even if $\cP_2 = \emptyset$ we can only guarantee that a control which minimizes \eqref{Setup and Preliminaries:cost functional p} for all systems $p \in \cP$  can be computed without having to consider any other system $q \in \cP \setminus \{ p \}$, but not whether all systems are actually stable.

\begin{theorem}\label{The covering algorithm:thm:stability}
	Suppose that for given maps $\Pi, \Theta: 2^\cP \to 2^\cP$ we have that for a given initial value $x_0 \in \bX$ the set of admissible controls $\bUpNpad(n, \xp(n), \ip(n))$ is not empty for all $p \in \cP$ and all $n \in \N_0$. Suppose furthermore that there exist $\alpha_1^p, \alpha_2^p, \alpha_3^p \in \cK_\infty$, $\gamma \in \cK_\infty$ and $\alpha > 0$ such that inequalities \eqref{Stability:proposition:online alpha aposteriori distributed:eq2} and \eqref{Stability:proposition:online alpha aposteriori distributed:proof:eq1} hold for all $n \in \N_0$. Then there exists a function $\beta \in \cKL$ which only depends on $\alpha$, $\gamma$ and all $\alpha_1^p, \alpha_2^p, \alpha_3^p$, $p \in \cP$, such that \eqref{Stability:proposition:online alpha aposteriori:eq4} holds for all $n \in \N_0$. 
	
	Moreover, there exists an $\overline{n} \in \N_0$ such that for each $n \geq \overline{n}$ we either have that $\cP_i \not = \emptyset$, $i \geq 2$ holds with $\cIp(n, 1) = \emptyset$ for all $p \in \cP_i$ or $\cP_2 = \emptyset$.
\end{theorem}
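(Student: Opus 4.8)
The first assertion is an immediate consequence of Proposition~\ref{Stability:proposition:online alpha aposteriori distributed2}. I would first observe that we may take $\alpha \in (0,1]$: since $\gamma$ is nonnegative and $\l_\cP \ge 0$, if \eqref{Stability:proposition:online alpha aposteriori distributed:proof:eq1} holds with some $\alpha > 1$ it also holds with $\alpha$ replaced by $1$. The remaining hypotheses of Proposition~\ref{Stability:proposition:online alpha aposteriori distributed2} are then exactly those assumed here --- \eqref{Stability:proposition:online alpha aposteriori distributed:proof:eq1} for all $n \in \N_0$ with $\gamma$ and $\alpha$, and \eqref{Stability:proposition:online alpha aposteriori distributed:eq2} with $\alpha_1^p,\alpha_2^p,\alpha_3^p$ --- while the nonemptiness of $\bUpNpad(n,\xp(n),\ip(n))$ together with Theorem~\ref{The covering algorithm:thm:feasibility} guarantees that the closed loop \eqref{Setup and Preliminaries:eq:closed loop solution p} is well defined and remains in $\bX$. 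Hence the proposition produces a $\beta \in \cKL$, depending only on $\alpha$, $\gamma$ and the $\alpha_i^p$, such that \eqref{Stability:proposition:online alpha aposteriori:eq4} holds for all $n \in \N_0$.

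For the second assertion I would proceed by contradiction and invoke Theorem~\ref{The covering algorithm:thm:no beta} for the given (fixed) maps $\Pi,\Theta$; its proof is carried out for an arbitrary fixed pair, so the instance we need is available. Assume the claimed dichotomy fails. Then for every $\overline{n} \in \N_0$ there is an $n \geq \overline{n}$ at which \emph{both} alternatives are violated: $\cP_2 \neq \emptyset$, and it is not true that every nonempty list $\cP_i$ with $i \geq 2$ satisfies $\cIp(n,1) = \emptyset$ for all $p \in \cP_i$. Applying the latter with $i = 2$ and using $\cP_2 \neq \emptyset$, we obtain some $p \in \cP_2$ with $\cIp(n,1) \neq \emptyset$; in particular there is a list $\cP_i$ with $i \geq 2$, $\cP_i \neq \emptyset$, and $\cIp(n,1) \neq \emptyset$ for some $p \in \cP_i$. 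Since this holds for some $n \geq \overline{n}$ and all $\overline{n} \in \N_0$, the hypothesis of Theorem~\ref{The covering algorithm:thm:no beta} is met, so no $\beta \in \cKL$ satisfies \eqref{Stability:proposition:online alpha aposteriori:eq4} for all $n \in \N_0$ --- contradicting the first assertion. Therefore the dichotomy holds; equivalently, the contrapositive of Theorem~\ref{The covering algorithm:thm:no beta} furnishes an $\overline{n}$ such that for all $n \geq \overline{n}$ and all $i \geq 2$ either $\cP_i = \emptyset$ or $\cIp(n,1) = \emptyset$ for all $p \in \cP_i$, and distinguishing whether some $\cP_i$ with $i \geq 2$ is nonempty (if none is, then in particular $\cP_2 = \emptyset$) gives precisely the two stated cases.

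The proof involves no new estimate or construction; its substance lies entirely in Proposition~\ref{Stability:proposition:online alpha aposteriori distributed2} and Theorem~\ref{The covering algorithm:thm:no beta}, both already established. The points requiring care are that the hypotheses of those two results match verbatim (the reduction of $\alpha > 0$ to $\alpha \in (0,1]$, and reading the weighting $\gamma$ as an element of $\cK_\infty^P$ as Proposition~\ref{Stability:proposition:online alpha aposteriori distributed2} expects, cf.\ Example~\ref{Stability:example2}), the correct negation of the ``infinitely often'' clause in the hypothesis of Theorem~\ref{The covering algorithm:thm:no beta}, and that Theorem~\ref{The covering algorithm:thm:no beta} --- though phrased ``for all maps $\Pi,\Theta$'' --- is proved for a fixed pair and hence applies to the given one. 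I expect the quantifier bookkeeping around the ``for some $n \ge \overline{n}$ and all $\overline{n}$'' clause, and its matching to the stated dichotomy, to be the most error-prone step.
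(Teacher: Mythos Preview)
Your proposal is correct and follows essentially the same approach as the paper: feasibility via Theorem~\ref{The covering algorithm:thm:feasibility}, stability via Proposition~\ref{Stability:proposition:online alpha aposteriori distributed2}, and the dichotomy by contradiction through Theorem~\ref{The covering algorithm:thm:no beta}. Your version is in fact more careful than the paper's own proof on the bookkeeping points you flag (reducing $\alpha>0$ to $\alpha\in(0,1]$, reading $\gamma$ as a $\cK_\infty^P$ function, and noting that Theorem~\ref{The covering algorithm:thm:no beta} is proved for a fixed pair $\Pi,\Theta$), all of which the paper leaves implicit.
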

\begin{proof}
	Using $x_0 \in \X$, $\bUpNpad(n, \xp(n), \ip(n)) \not = \empty$ for all $p \in \cP$ and all $n \in \N_0$ and Theorem \ref{The covering algorithm:thm:feasibility} we obtain that the closed loop solution $x(n) = ( x_1(n)^\top, \ldots, x_P(n)^\top )^\top$ exists for all $n \in \N_0$ and satisfies $x(n) \in \bX$ for all $n \in \N_0$. Now, since inequalities \eqref{Stability:proposition:online alpha aposteriori distributed:eq2} and \eqref{Stability:proposition:online alpha aposteriori distributed:proof:eq1} hold for all $n \in \N_0$, the existence of $\beta \in \cKL$ follows directly from Proposition \ref{Stability:proposition:online alpha aposteriori distributed2}. To show the existence of $\overline{n} \in \N_0$ such that for $n \geq \overline{n}$ we either have that $\cP_2 = \emptyset$ or $\cP_i \not = \emptyset$, $i \geq 2$ holds with $\cIp(n, 1) = \emptyset$ for all $p \in \cP_i$, suppose that $\cP_i \not = \emptyset$, $i \geq 2$ holds for some $n \geq \overline{n}$ and all $\overline{n} \in \N_0$ with $\cIp(n, 1) \not = \emptyset$ for some $p \in \cP_i$. Then, using Theorem \ref{The covering algorithm:thm:no beta} and the existence of $\beta \in \cKL$ we obtain a contradiction showing the assertion.
\end{proof}

\begin{remark}
	While the stability result of Theorem \ref{The covering algorithm:thm:stability} is given for the NMPC case without stabilizing terminal constraints or terminal costs, the only critical component in the proof of this theorem is the condition that $\bUpNpad(n, \xp(n), \ip(n)) \not = \emptyset$ which guarantees that the closed loop solution $x(\cdot) = ( x_1(\cdot)^\top, \ldots, x_P(\cdot)^\top )^\top$ exists and satisfies the state constraints. Hence, if instead of the existence conditions of $\alpha_1^p, \alpha_2^p, \alpha_3^p \in \cK_\infty$ and $\alpha > 0$ such that inequalities \eqref{Stability:proposition:online alpha aposteriori distributed:eq2} and \eqref{Stability:proposition:online alpha aposteriori distributed:proof:eq1} hold we impose other stability conditions -- e.g., the terminal constraint condition given in \citet{KG1988} or the terminal costs from \citet{ChAl1998}  -- then the same proof can be used to guarantee asymptotic stability of the closed loop.
\end{remark}


\section{Conclusion}
\label{Section:Conclusion}

We presented a generalized stability result for NMPC controllers without stabilizing terminal constraints or terminal costs. Moreover, we described an algorithm which allows us to generate a hierarchy of such controllers in a distributed non cooperative setting. Using only abstract priority and testing maps, we have shown necessary as well as sufficient conditions for stability of the closed loop.

Future research concerning the algorithm will certainly deal with the question how the priority and testing maps should be chosen to minimize the number of priority lists or to maximize the number of controllers that can be run in parallel. From the stability side an indeep analysis is required to apriori guarantee condition \eqref{Stability:proposition:online alpha aposteriori distributed:proof:eq1}. The availability of such a condition would then allow us to apriori guarantee Algorithm \ref{The covering algorithm:alg:abstract} to asymptotically stabilize the system. One idea in this direction is outlined in \citet[Section 7]{GW2010} and suggests the use of ISS small gain theorems to treat this problem.




\end{document}